\title{$\clP_1$-covers over commutative rings}
\author[S.~Bazzoni]{Silvana Bazzoni}
\address[Silvana Bazzoni]{%
Dipartimento di Matematica ``Tullio Levi-Civita'' \\
Universit\`a di Padova \\
Via Trieste 63, 35121 Padova (Italy)}
\email{bazzoni@math.unipd.it}
\author[G.~Le Gros]{Giovanna Le Gros}
\address[Giovanna Le Gros]{%
Dipartimento di Matematica ``Tullio Levi-Civita'' \\
Universit\`a di Padova \\
Via Trieste 63, 35121 Padova (Italy)}
\email{giovannagiulia.legros@math.unipd.it}
\subjclass[2010]{{13B30, 13C60, 13D07, 18E40}}
\keywords{Cover, projective dimension one,  semihereditary rings}
\thanks{Research supported by grants from Ministero dell'Istruzione, dell'Universit\`a e della Ricerca (PRIN: ``Categories, Algebras: Ring-Theoretical and Homological Approaches (CARTHA)'') and Dipartimento di Matematica ``Tullio Levi-Civita'' of Universit\`a di Padova (Research program DOR1828909 ``Anelli e categorie di moduli''). }
\renewcommand{\iff}{if and only if }
\newcommand{\m}{\mathfrak{m}}
\newcommand{\p}{\mathfrak{p}}
\newcommand{\Hom}{\operatorname{Hom}}
\newcommand{\Ext}{\operatorname{Ext}}
\newcommand{\Tor}{\operatorname{Tor}}
\newcommand{\Ker}{\operatorname{Ker}}
\newcommand{\Coker}{\operatorname{Coker}}
\newcommand{\wdim}{\operatorname{w.dim}}
\newcommand{\pdim}{\operatorname{p.dim}}
\newcommand{\Fdim}{\operatorname{F.dim}}
\newcommand{\Fwdim}{\operatorname{F.w.dim}}
\newcommand{\fdim}{\operatorname{f.dim}}
\newcommand{\id}{\operatorname{id}}
\newcommand{\A}{\mathcal{A}}
\newcommand{\B}{\mathcal{B}}
\newcommand{\C}{\mathcal{C}}
\newcommand{\D}{\mathcal{D}}
\newcommand{\F}{\mathcal{F}}
\newcommand{\clP}{\mathcal{P}}
\newcommand{\clS}{\mathcal{S}}
\newcommand{\T}{\mathcal{T}}
\newcommand{\Modr}[1]{\mathrm{Mod}\textrm{-}{#1}}
\newcommand{\modr}[1]{\mathrm{mod}\textrm{-}{#1}}
\newcommand{\ModR}{\mathrm{Mod}\textrm{-}R}
\newcommand{\RMod}{R\textrm{-}\mathrm{Mod}}
\newcommand{\ModQ}{\mathrm{Mod}\textrm{-}Q}
\newcommand{\QMod}{Q\textrm{-}\mathrm{Mod}}
\newcommand{\modR}{\mathrm{mod}\textrm{-}R}
\newcommand{\modQ}{\mathrm{mod}\textrm{-}Q}
\newcommand{\clPmR}{\clP_1(\mathrm{mod}\textrm{-}R)}
\newcommand{\clPmQ}{\clP_1(\mathrm{mod}\textrm{-}Q)}
\theoremstyle{plain}
\newtheorem{thm}{Theorem}[section]
\newtheorem{lem}[thm]{Lemma}
\newtheorem{prop}[thm]{Proposition}
\newtheorem{cor}[thm]{Corollary}
\theoremstyle{definition}
\newtheorem{defn}[thm]{Definition}
\newtheorem{expl}[thm]{Example}
\theoremstyle{remark}
\newtheorem{rem}[thm]{Remark}
\begin{document}
\begin{abstract}
In this paper we consider the class $\clP_1(R)$ of modules of projective dimension at most one over a commutative ring $R$ and we investigate when $\clP_1(R)$ is a covering class. More precisely, we investigate Enochs' Conjecture for this class, that is the question of whether $\clP_1(R)$ is covering necessarily implies that $\clP_1(R)$ is closed under direct limits. We answer the question affirmatively in the case of a commutative semihereditary ring $R$. This gives an example of a cotorsion pair $(\clP_1(R), \clP_1(R)^\perp)$ which is not necessarily of finite type such that $\clP_1(R)$ satisfies Enochs' Conjecture. Moreover, we describe the class $\varinjlim \clP_1(R)$ over (not-necessarily commutative) rings which admit a classical ring of quotients.

\end{abstract}

\maketitle
\section{Introduction}
Approximation theory in module categories was studied in the setting of finite dimensional algebras by Auslander, Reiten, and Smal\o \ and independently by Enochs and Xu for modules over arbitrary rings using the terminology of preenvelopes and precovers.

An important problem in approximation theory is when minimal approximations, that is covers or envelopes, exist. In other words, for a certain class $\C$, the aim is to characterise the rings over which every module has a minimal approximation provided by $\C$ and furthermore to characterise the class $\C$ itself. 
 Bass proved in~\cite{Bass} that projective covers rarely exist, that is he introduced and characterised the class of perfect rings which are exactly the rings over which every module admits a projective cover. This motivated the study of minimal approximations for an arbitrary class $\C$.

Among the many characterisations of perfect rings, the most important from the homological point of view is the closure under direct limits of the class of projective modules. A famous theorem of Enochs says that for a class $\C$ in $\ModR$, if $\C$ is closed under direct limits, then any module that has a $\C$-precover has a $\C$-cover \cite{Eno}, \cite[Theorem 2.2.6 and 2.2.8]{Xu}. 

The converse problem, that is the question of when $\C$ is covering implies that $\C$ is closed under direct limits, is still an open problem which is known as Enochs' Conjecture.

In 2018, Angeleri H\"ugel-\v Saroch-Trlifaj in \cite{AST17} proved that Enochs' Conjecture holds for a large collection of left-hand classes of cotorsion pairs. Explicity, they proved that for a cotorsion pair $(\mathcal{A}, \mathcal{B})$ such that $\mathcal{B}$ is closed under direct limits, $\mathcal{A}$ is covering if and only if it is closed under direct limits. To prove this, Angeleri H\"ugel-\v Saroch-Trlifaj used methods developed in \v{S}aroch's paper \cite{S17}, which uses sophisticated set-theoretical methods in homological algebra.

  In this paper we are interested in Enochs' Conjecture for the class $\clP_1(R)$. The question naturally splits into two cases: the case that  the cotorsion pair $(\clP_1(R), \clP_1(R)^\perp)$ is of finite type (which occurs if and only if $\clP_1(R)^\perp$ is closed under direct sums, or equivalently when $(\clP_1(R), \clP_1(R)^\perp)$ is a $1$-tilting cotorsion pair), and the case when it is not of finite type. 
  
 In a forthcoming paper \cite{BLG20} we consider $1$-tilting cotorsion pairs $(\A, \B)$ over commutative rings $R$ and characterise the rings over which $\A$ is covering using a purely algebraic approach.

In this paper we consider the case that the cotorsion pair $(\clP_1(R), \clP_1(R)^\perp)$ is not necessarily of finite type (see Proposition~\ref{P:not-finite-type}). To the best of our knowledge, up until now there are no positive results for this question. Thus this paper provides a first positive result in the case of non-finite type. 

In the investigation of when $\clP_1(R)$ is covering, the class $\varinjlim \clP_1(R)$ plays an important role, although it is not always well understood. Unlike the case of the projective modules, where their direct limit closure is the class of flat modules, it is not necessarily true that the direct limit closure of $ \clP_1(R)$ coincides with the class $\F_1(R)$, of the modules of weak dimension at most  $1$. The inclusion $\varinjlim \clP_1(R) \subseteq \F_1(R)$ always holds, however an example of rings where $\varinjlim \clP_1(R) \subsetneq \F_1(R)$ can be found in \cite[Example 9.12]{GT12}. For certain nice rings, such as commutative domains, the two classes $\varinjlim \clP_1(R)$ and $\F_1(R)$ coincide (\cite[Theorem 9.10]{GT12}).

This paper is structured as follows. 
We begin in Section~\ref{S:prelim} with some preliminaries.

The aim of Section~\ref{S:limP_1} is to give a characterisation of the class $\varinjlim \clP_1(R)$ for a not-necessarily commutative ring $R$ which has a classical ring of quotients $Q$. This generalises a result from \cite{BH09} which was proved under the additional assumption that the little finitistic dimension of $Q$ is zero. A main result of Section~\ref{S:limP_1} is Proposition~\ref{P:P1-F1}, which states that $\varinjlim \clP_1(R)$ is exactly the intersection of $\F_1(R)$ with the left $\Tor^R_1$-orthogonal of the minimal cotilting class of cofinite type of $\QMod$ (see the definitions in Section~\ref{S:limP_1}).
%

After an overview of some useful results for commutative rings in Section~\ref{S:properties}, in Section~\ref{S:sh-P1-cov} we
assume that $\clP_1(R)$ is a covering class, and state some consequences of this assumption for the total ring of quotients $Q$ of $R$ and for localisations of $R$.



Finally in Section~\ref{S:sh-comm} we restrict to looking at only commutative semihereditary rings.
The main result of this paper is a positive solution of Enochs' Conjecture for the class $\clP_1(R)$ over a commutative semihereditary ring $R$. In Theorem~\ref{T:sh-P1-cov-lim} we show that in this case $\clP_1(R)$ is covering if and only if the ring is hereditary, which clearly implies that $\clP_1(R)$ is closed under direct limits. 
This provides us with an example of a class of rings for which $\clP_1(R)$ satisfies Enochs' Conjecture even though $(\clP_1(R), \clP_1(R)^\perp)$ may not be of finite type. 

 \section{Preliminaries}\label{S:prelim}
  $R$ will always denote an associative ring with unit and $\Modr R$ ($\RMod$) the category of right (left) $R$-modules.

For a ring $R$,  $\modr R$ will denote the class of right $R$-modules admitting a projective resolution consisting of finitely generated projective modules.  

Let $\C $ be a class of right $R$-modules. The right $\Ext^1_R$-orthogonal and right $\Ext^\infty_R$-orthogonal classes of $\C$ are defined as follows. 

\[
\C ^{\perp_1} =\{M\in \Modr R \ | \ \Ext_R^1(C,M)=0  \ {\rm for \
all\ } C\in \C\}\]
\[\C^\perp = \{M\in \Modr R \ | \ \Ext_R^i(C,M)=0  \ {\rm for \
all\ } C\in \C, \ {\rm for \
all\ } i\geq 1 \}\]

The left Ext-orthogonal classes ${}^{\perp_1} \C$ and ${}^\perp \C$ are defined symmetrically. 

For $\C$ a class in $\ModR$, the right $\Tor^R_1$-orthogonal and right $\Tor^R_\infty$-orthogonal classes are classes in $\RMod$ defined as follows. 

\[\C^{\intercal_1}=\{M\in \RMod  \ | \ \Tor^R_1(C,M)=0, \ {\rm for \
all\ } C\in \C  \} 
\]
\[\C^{\intercal}=\{M\in \RMod  \ | \ \Tor^R_i(C,M)=0 \ {\rm for \
all\ } C\in \C,  \ {\rm for \ all\ } i \geq1\}\]

The left $\Tor^R_1$-orthogonal and left $\Tor^R_\infty$-orthogonal classes ${}^{\intercal_1}\C$, ${}^{\intercal}\C$ are classes in $\ModR$ which are defined symmetrically for a class $\C$ in $\RMod$. 
%

If the class $\C$ has only one element, say $\C = \{X\}$, we write $X^{\perp_1}$ instead of $\{X\}^{\perp_1}$, and similarly for the other $\Ext$-orthogonal and $\Tor$-orthogonal classes.
\\

We denote by $\clP_n(R)$, ($\F_n(R)$) the class of right $R$-modules of projective (flat) dimension at most $n$ and by $\clPmR$ the class $\clP_1(R)\cap \modR$, that is the class of finitely presented right $R$-modules of projective dimension at most $1$. 

 The projective dimension (weak or flat dimension) of a right $R$-module $M$ is denoted $\pdim_R M$ ($\wdim_R M$).
We will omit the $R$ when the ring is clear from context.

Given a ring $R$, the {\sl right big finitistic dimension}, $\Fdim R$, is the supremum of the projective dimension of right $R$-modules with finite projective dimension and the {\sl right big weak finitistic dimension}, $\Fwdim R$ is the supremum of the flat dimension of right $R$-modules with finite flat dimension. The {\sl right little finitistic dimension}, f.dim $R$, is the supremum of the projective dimension of right $R$-modules in $\modr R$ with finite projective dimension. 
\\

For any class $\C$ of modules we recall the notion of
a $\C$-{\sl precover}, a {\sl special} $\C$-{\sl precover} and of a $\C$-{\sl cover} (see \cite{Xu}).
\begin{defn} Let $\C$ be a class of modules, $M$ a right $R$-module and $C\in \C$. A homomorphism $\phi\in \Hom_R(C, M)$ is called a
$\C$-{\sl precover} (or right approximation) of $M$ if for every homomorphism $f '\in \Hom_R(C', M)$ with $C'\in \C$ there exists a homomorphism
$f\colon C'\to C$ such that $f '= \phi f$.

A $\C$-precover, $\phi\in \Hom_R(C, M)$ is called a $\C$-{\sl cover} (or a minimal right approximation) of $M$ if for every endomorphism $f$ of $C$ such that  $\phi=
\phi f$,
$f$ is an automorphism of $C$. So a $\C$-cover is a minimal version of a $\C$-precover.

A $\C$-precover $\phi$ of $M$ is said to be {\sl special} if $\phi$ is an epimorphism and $\Ker \phi\in \C^{\perp_1}$.
\end{defn}
The notions of $\C$-{\sl preenvelope} (left approximations), {\sl special} $\C$-{\sl preenvelope} and $\C$-{\sl envelope} (minimal left approximations) are defined dually.

 The relation between $\C$-precovers and $\C$-covers is provided by the following results due to Xu.
\begin{prop}\label{P:Xu} \cite[Corollary 1.2.8]{Xu} Let $\C$ be a class of modules and assume that a module $M$ admits a $\C$-cover. Then a $\C$-precover
$\phi\colon C\to M$ is a
$\C$-cover if and only if $\Ker \phi$ does not contain any non-zero direct summand of $C$.
\end{prop}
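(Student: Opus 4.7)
The plan is to prove both directions, with the hard work lying in the reverse direction, which requires us to actually exploit the existence of some $\C$-cover of $M$.

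For the forward direction, assume $\phi$ is a $\C$-cover and suppose for contradiction that $C = K \oplus C'$ with $0 \neq K \subseteq \Ker\phi$. Let $f\colon C \to C$ be the projection onto $C'$ along $K$, viewed as an endomorphism of $C$. Since $\phi$ vanishes on $K$, one checks immediately that $\phi f = \phi$, so the cover property forces $f$ to be an automorphism; but $f$ kills the nonzero submodule $K$, a contradiction.

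For the reverse direction, let $\psi\colon C_0 \to M$ be a $\C$-cover of $M$ (which exists by hypothesis). Because both $\phi$ and $\psi$ are $\C$-precovers, there exist maps $u\colon C_0 \to C$ and $v\colon C \to C_0$ with $\phi u = \psi$ and $\psi v = \phi$. Then $\psi(vu) = \phi u = \psi$, and the cover property of $\psi$ forces $vu \in \End(C_0)$ to be an automorphism. The key move is to replace $u$ by $u' := u(vu)^{-1}\colon C_0 \to C$, so that $vu' = \id_{C_0}$ and hence $e := u'v$ is an idempotent in $\End(C)$. This idempotent yields a direct sum decomposition $C = eC \oplus (\id-e)C$, with $eC = u'(C_0)$.

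Now I claim $(\id - e)C \subseteq \Ker \phi$. Indeed, if $x \in (\id-e)C$, then $u'v(x) = 0$; since $u'$ is a split monomorphism it is injective, so $vx = 0$, and hence $\phi(x) = \psi v(x) = 0$. By the hypothesis, the direct summand $(\id-e)C$ of $C$ that sits inside $\Ker\phi$ must be zero, so $e = \id$ and $u'\colon C_0 \to C$ is an isomorphism. Finally, to verify the cover property of $\phi$: if $f \in \End(C)$ satisfies $\phi f = \phi$, transport $f$ via the isomorphism $u'$ and use $\phi = \psi\alpha$ for the composite isomorphism $\alpha := (vu)^{-1}(u')^{-1}\colon C \to C_0$; one computes $\psi(\alpha f \alpha^{-1}) = \psi$, so the cover property of $\psi$ makes $\alpha f \alpha^{-1}$, and therefore $f$ itself, an automorphism. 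The main obstacle is the packaging of $u$ and $v$ into a genuine idempotent of $\End(C)$; once this is done, the no-direct-summand-in-kernel hypothesis collapses everything onto $C_0$ and the cover property transfers cleanly.
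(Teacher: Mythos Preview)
The paper does not supply its own proof of this proposition; it simply quotes the result from Xu \cite[Corollary 1.2.8]{Xu}. Your argument is correct and is essentially the standard one: the forward direction is immediate from the definition, and for the converse you compare $\phi$ with an existing cover $\psi\colon C_0\to M$, force a splitting of $C$ via the idempotent $e=u'v$, use the hypothesis to kill the complementary summand, and transport minimality along the resulting isomorphism $C_0\cong C$. One cosmetic simplification: once $u'$ is an isomorphism with $vu'=\id_{C_0}$ you have $v=(u')^{-1}$, so $\phi=\psi v=\psi(u')^{-1}$ directly, and you can take $\alpha=(u')^{-1}$ rather than the slightly more elaborate $\alpha=(vu)^{-1}(u')^{-1}$.
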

A class $\C$ of $R$-modules is called {\sl covering} ({\sl precovering, special precovering}) if every module admits a $\C$-cover ($\C$-precover, special $\C$-precover).

In approximation theory of modules, one is interested in when certain classes provide minimal approximations. Enochs and Xu ~ \cite[Theorem 2.2.8]{Xu},  proved that a precovering class closed under direct limits is covering.
Enochs posed the question to see if the closure under direct limits of a class $\C$ is a necessary condition for the existence of $\C$-covers. 
Our aim is to investigate this problem for the class $\clP_1(R)$.\\%

We consider precovers and preenvelopes for particular classes of modules, that is classes which form a cotorsion pair.

A pair of classes of modules $(\A, \B)$ is a {\sl%
cotorsion pair} provided that $\A = {}^{\perp_1} 
\B$ and $
\B =\A^{\perp_1}$. A cotorsion pair $(\A, \B)$ is called {\sl hereditary} if $\A = {}^{\perp} \B$ and $\B = \A^{\perp}$. 

A cotorsion pair $(\A, \B)$ is {\sl complete} provided that every $R$-module $M$ admits a {special $\B$-preenvelope} or equivalently, every $R$-module $M$ admits a {special $\A$-precover} (\cite{Sal}).

A hereditary cotorsion pair $(\A, \B)$ is of {\sl finite type} if there is a set $\clS\subseteq\modR$ such that $\clS^\perp=\B.$

Examples of complete cotorsion pairs in $\ModR$ include $(\clP_0, \ModR)$, $(\F_0, \F_0^\perp)$,  $(\clP_1(R), \clP_1(R)^\perp)$ (see \cite[Theorem 8.10]{GT12}).\\

A useful result for covers is given by the following.
 \begin{prop}\label{P:A-covers} 
 Let $(\A, \B)$ be a complete cotorsion pair in $\ModR$.  Assume that 
$ A\overset{\phi}\to M\to 0$ is an $\A$-cover of the $R$-module $M$. Let $\alpha$ be an automorphism of $M$ and let $\beta $ be an endomorphism of $A$ such that $\phi\beta=\alpha\phi$. Then $\beta$ is an automorphism of $A$.
\end{prop}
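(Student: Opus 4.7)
The plan is to exploit both aspects of the cover property of $\phi$: the existence of factorisations (since a cover is in particular an $\A$-precover) and the minimality condition (every endomorphism $f$ of $A$ with $\phi f=\phi$ is an automorphism of $A$). The strategy is to produce an auxiliary endomorphism $\gamma$ of $A$ so that \emph{both} composites $\beta\gamma$ and $\gamma\beta$ satisfy $\phi(-)=\phi$. Minimality will then force both composites to be automorphisms of $A$, from which $\beta$ itself must be bijective.

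First I would invert $\alpha$ and consider the morphism $\alpha^{-1}\phi\colon A\to M$. Applying the precover property of $\phi$ to this morphism (together with the fact that $A\in\A$), I obtain an endomorphism $\gamma\in\End_R(A)$ such that $\phi\gamma=\alpha^{-1}\phi$. A direct calculation using the given relation $\phi\beta=\alpha\phi$ then yields
\[
\phi(\beta\gamma)=\alpha(\phi\gamma)=\alpha\alpha^{-1}\phi=\phi
\qquad\text{and}\qquad
\phi(\gamma\beta)=\alpha^{-1}(\phi\beta)=\alpha^{-1}\alpha\phi=\phi.
\]
Invoking the cover property (the minimality clause) twice, I conclude that both $\beta\gamma$ and $\gamma\beta$ are automorphisms of $A$.

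To finish: since $\beta\gamma$ is surjective, $\beta$ must be surjective; and since $\gamma\beta$ is injective, $\beta$ must be injective. Hence $\beta$ is an automorphism of $A$, as claimed. There is no real obstacle in this argument; the only point worth emphasising is that one really needs to form both composites $\beta\gamma$ and $\gamma\beta$, since a single composite combined with the minimality condition would yield either surjectivity or injectivity of $\beta$, but not both at once.
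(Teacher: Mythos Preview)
Your proof is correct and follows the same core strategy as the paper: produce an auxiliary endomorphism of $A$ via the precover property, then use minimality to force certain composites to be automorphisms. The endomorphism you call $\gamma$ is the same as the paper's $g$ (both satisfy $\phi\gamma=\alpha^{-1}\phi$, equivalently $\alpha\phi g=\phi$).

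There is one genuine simplification in your version. To conclude that $\gamma\beta$ is an automorphism, you compute $\phi(\gamma\beta)=\phi$ directly and invoke the minimality of $\phi$. The paper instead first invokes the Wakamatsu Lemma (hence the completeness hypothesis on the cotorsion pair) to argue that $\alpha\phi$ is again an $\A$-cover, and then applies the minimality of $\alpha\phi$ to the equation $\alpha\phi(g\beta)=\alpha\phi$. Your route bypasses Wakamatsu entirely and never uses completeness of $(\A,\B)$; the argument works for any $\A$-cover $\phi$, with no further assumption on the class $\A$. The paper's detour is harmless but unnecessary.
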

\begin{proof}
By the Wakamatsu Lemma (see \cite[Lemma 2.1.1]{Xu}) $\phi$ gives rise to an exact sequence 
$0\to B\overset{\mu}\to A\overset{\phi}\to M\to 0$ with $B\in \B$. Since $\alpha$ is an automorphism of $M$, it is immediate to see that $\Ker \alpha \phi \cong B \in \B$ and 
that $0\to B\to A\overset{\alpha\phi}\to M\to 0$ is an $\A$-cover of $M$.

Let $\beta$ be as assumed and consider an endomorphism $g$ of $A$ such that $\alpha\phi g=\phi$. Then $\phi\beta g=\phi$ and thus $\beta g$ is an automorphism of $A$, since $\phi$ is an $\A$ cover of $M$. This implies that $\beta$ is an epimorphism. 

To see that $\beta$ is a monomorphism, note that $\alpha \phi g \beta =  \phi \beta g \beta = \phi \beta = \alpha \phi$, thus by the cover property of $\alpha \phi$, $g \beta$ is an automorphism, thus $\beta$ is an automorphism as required.
\end{proof}
\begin{cor}\label{C:cov-of-localisation}
Let $R$ be a commutative ring, $R[S^{-1}]$ be the localisation of $R$ at a multiplicative subset $S$. Let $(\A, \B)$ be a cotorsion pair in $\ModR$. Suppose 
\[
(\ast)\quad 0 \to  B \to A  \overset{\phi} \to M\to 0
\]
 is an $\A$-cover of an $R[S^{-1}]$-module $M$.\\Then $(\ast)$ is a short exact sequence in $\ModR[S^{-1}]$.
\end{cor}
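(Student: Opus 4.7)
The plan is to show that multiplication by each element of $S$ acts as an automorphism on $A$ (and on $B$), which then endows these $R$-modules with a (necessarily unique) $R[S^{-1}]$-module structure compatible with the $R$-linear maps in the sequence. The main tool is Proposition~\ref{P:A-covers}, which was stated precisely in order to enable this kind of argument.

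Fix $s\in S$ and denote by $\mu_s^X\colon X\to X$ multiplication by $s$ on an $R$-module $X$. Because $M$ is an $R[S^{-1}]$-module, $\mu_s^M$ is an automorphism of $M$. Since $\phi$ is $R$-linear we have the commutativity $\phi\circ\mu_s^A=\mu_s^M\circ\phi$. Applying Proposition~\ref{P:A-covers} with $\alpha=\mu_s^M$ and $\beta=\mu_s^A$ we conclude that $\mu_s^A$ is an automorphism of $A$. Thus every element of $S$ acts invertibly on $A$, so by the universal property of localisation $A$ acquires a unique $R[S^{-1}]$-module structure extending its $R$-module structure.

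Next I would check that $B=\Ker\phi$ is stable under, and inverted by, the maps $\mu_s^A$. Stability is clear, and injectivity of $\mu_s^A|_B$ is inherited from that of $\mu_s^A$. For surjectivity, given $b\in B$ choose $a\in A$ with $\mu_s^A(a)=b$; then $s\,\phi(a)=\phi(b)=0$, and since $\mu_s^M$ is invertible on $M$ this forces $\phi(a)=0$, i.e.\ $a\in B$. Hence $B$ is also an $R[S^{-1}]$-module, and the inclusion $B\hookrightarrow A$ is $R[S^{-1}]$-linear.

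Finally, since $\phi$ is $R$-linear and $M$, $A$ are $R[S^{-1}]$-modules whose structures extend the $R$-module structures uniquely, $\phi$ is automatically $R[S^{-1}]$-linear. Thus the sequence $(\ast)$ is a short exact sequence in $\Mod\text{-}R[S^{-1}]$. I do not anticipate any serious obstacle: the whole content of the argument is the application of Proposition~\ref{P:A-covers} to the endomorphism $\mu_s^A$, together with the elementary observation that $B$ inherits invertibility of $\mu_s$.
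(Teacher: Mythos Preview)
Your argument is correct and follows the same strategy as the paper: apply Proposition~\ref{P:A-covers} with $\alpha=\mu_s^M$ and $\beta=\mu_s^A$ to conclude that every $s\in S$ acts invertibly on $A$. The only cosmetic difference is in the wrap-up: rather than checking by hand that $B$ inherits the $R[S^{-1}]$-structure and that the maps are $R[S^{-1}]$-linear, the paper simply observes that $R\to R[S^{-1}]$ is a ring epimorphism, so the forgetful functor $\Modr{R[S^{-1}]}\to\ModR$ is fully faithful; once $A$ and $M$ are $R[S^{-1}]$-modules, $\phi$ is automatically $R[S^{-1}]$-linear and $B=\Ker\phi$ lies in $\Modr{R[S^{-1}]}$.
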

\begin{proof}
Let $M$ and $\phi$ be as assumed. The multiplication by an element of $S$ is an automorphism of $M$. Therefore by Proposition~\ref{P:A-covers}, $A$ is an $R[S^{-1}]$-module. Thus the sequence $(\ast)$ is an exact sequence in $\ModR[S^{-1}]$ since $R \to R[S^{-1}]$ is a ring epimorphism, hence the embedding $\ModR[S^{-1}] \to \ModR$ is fully faithful.
\end{proof}
\section{The direct limit closure of $\clP_1(R)$}\label{S:limP_1}
From now on, $R$ will always be a ring such that $\Sigma$, the set of regular elements of $R$, satisfies both the left and right Ore conditions. The {\sl classical ring of quotients} of $R$, denoted $Q = Q(R)$ is the ring $R[\Sigma^{-1}] = [\Sigma^{-1}] R$ which is flat both as a right and a left $R$-module. 
Additionally, we recall that an ideal $I$ of $R$ is called {\sl regular} if $I$ contains a regular element of $R$, that is $I \cap \Sigma \neq \emptyset$.
 
 Recall that $\clP_1(R)$ denotes the class of right $R$-modules with projective dimension at most $1$ and $\clPmR$ is the set $\clP_1(R)\cap \modR$. 

A {\sl $1$-cotilting class of cofinite type} is the $\Tor^R_1$-orthogonal of a set of modules in $\clPmR$. Thus the {\sl minimal} $1$-cotilting class of cofinite type is $\clPmR^\intercal$, which we will denote by $\C(R)$. \\

The purpose of this section is to describe the class $\varinjlim \clP_1(R)$ generalising a result in \cite[Theorem 6.7 (vi)]{BH09} which was proved under the assumption $\fdim Q =0$.
We begin by recalling the following corollary, which states that one can consider only the finitely presented modules in $\clP_1(R)$ to find its direct limit closure.
\begin{thm}\cite[Corollary 9.8]{GT12}\label{T:lim-P1-P1modR}
Let $R$ be a ring. Then $\varinjlim \clP_1(R) = \varinjlim \clPmR= {}^\intercal(\clPmR^\intercal)$ and $\clPmR^\intercal = \clP_1(R)^\intercal=\big(\varinjlim \clP_1(R)\big)^\intercal.$
\end{thm}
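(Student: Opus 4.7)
The statement decomposes into two chains of equalities — one for Tor-orthogonals and one for direct-limit closures — both of which rely fundamentally on the fact that $\Tor^R_1$ commutes with direct limits in each variable. The single hard ingredient that both chains need is the following: every module $M \in \clP_1(R)$ can be realised as a direct limit of modules in $\clPmR$.

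I would start with the Tor-orthogonal chain $\clPmR^\intercal = \clP_1(R)^\intercal = (\varinjlim \clP_1(R))^\intercal$. The inclusions $\supseteq$ are immediate from $\clPmR \subseteq \clP_1(R) \subseteq \varinjlim \clP_1(R)$, and the identity $\clP_1(R)^\intercal = (\varinjlim \clP_1(R))^\intercal$ follows at once from commutation of $\Tor^R_1$ with direct limits: if $N \in \clP_1(R)^\intercal$ and $M = \varinjlim M_\lambda$ with $M_\lambda \in \clP_1(R)$, then $\Tor^R_1(M,N) = \varinjlim \Tor^R_1(M_\lambda,N) = 0$. For the nontrivial inclusion $\clPmR^\intercal \subseteq \clP_1(R)^\intercal$, granting the realisation of each $M \in \clP_1(R)$ as a direct limit of modules in $\clPmR$, the same commutation argument concludes.

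To carry out this realisation, I would start from a projective presentation $0 \to P \to F \to M \to 0$ with $F$ free and $P$ projective, apply Kaplansky's theorem to decompose $F$ and $P$ into countably generated pieces, and build a continuous transfinite filtration of $M$ whose successive factors are finitely presented with a finitely generated projective kernel, hence of projective dimension at most one. Equivalently, one can apply the Hill lemma to such a presentation to extract a large compatible family of finite subsystems of $M$ whose quotients lie in $\clPmR$. This deconstruction step is the principal technical obstacle: the bookkeeping must simultaneously control finite subsets of a basis of $F$ and finitely generated projective summands of $P$ so that both the generating and relation sides of each successive factor are finitely generated projective.

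Finally, the chain $\varinjlim \clP_1(R) = \varinjlim \clPmR = {}^\intercal(\clPmR^\intercal)$ follows from the previous step together with standard definability arguments. The equality $\varinjlim \clP_1(R) = \varinjlim \clPmR$ is immediate once one has the direct-limit realisation, combined with the trivial inclusion $\varinjlim \clPmR \subseteq \varinjlim \clP_1(R)$. For $\varinjlim \clPmR = {}^\intercal(\clPmR^\intercal)$, the inclusion $\subseteq$ is again direct-limit commutation. For $\supseteq$, I would use the character-module duality $\Tor^R_1(M,N) = 0 \iff \Ext^1_R(M,N^+) = 0$ to translate Tor-orthogonality against $\clPmR^\intercal$ into $\Ext^1_R$-orthogonality of $N^+$ against $\clPmR$, where $N^+$ ranges over pure-injective modules; combined with the standard fact that $\varinjlim \clPmR$ coincides with the definable closure of $\clPmR$ and the corresponding characterisation in terms of pure-injectives, this yields the required inclusion.
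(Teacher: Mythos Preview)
The paper does not supply a proof of this theorem: it is quoted verbatim from \cite[Corollary 9.8]{GT12} and used as a black box throughout Section~\ref{S:limP_1}. There is therefore no argument in the paper to compare your proposal against.

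That said, your outline is essentially the one carried out in G\"obel--Trlifaj. You correctly isolate the one nontrivial ingredient, namely that every module in $\clP_1(R)$ lies in $\varinjlim \clPmR$, and you name the right tools (Kaplansky's decomposition, the Hill Lemma). One refinement worth flagging: the Hill-lemma filtration one obtains directly has successive quotients that are \emph{countably} presented of projective dimension $\le 1$, not finitely presented; the passage to $\clPmR$ then needs a separate step in which a countably presented $M\in\clP_1(R)$ is written as a countable directed union via a presentation $0\to P\to F\to M\to 0$ with $P$ countably generated projective. Using the Eilenberg swindle one may assume $P$ is countably generated free, and then exhausting $P$ and $F$ by finitely generated free direct summands exhibits $M$ as $\varinjlim$ of modules in $\clPmR$. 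Your sketch alludes to this bookkeeping but glosses over the countable-to-finite reduction. The remaining parts of your argument (commutation of $\Tor$ with direct limits, and the character-module/definability identification $\varinjlim\clPmR={}^\intercal(\clPmR^\intercal)$) are standard and correctly stated.
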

Following the nomenclature of \cite{BH09}, $\D$ will denote the class $\{D\in{}\ModR \mid \Ext^1_R(R/rR, D)=0,\, r \in{}\Sigma \}$ of {\sl divisible} right $R$-modules and $\T\F$ will denote the class $\{ N\in \RMod \mid \Tor^R_1(R/rR,\,  N)=0,\, r \in \Sigma \}$ of {\sl torsion-free} left $R$-modules. The analogous statements hold for the divisible modules in $\RMod$ and the torsion-free modules in $\ModR$.

By \cite[Lemma 5.3]{BH09}, for a ring $R$ with a classical ring of quotients $Q$ and every torsion-free left $R$-module ${}_RN$, $\Tor^R_1(Q/R, N)=0$. Analogously, for every torsion-free right $R$-module $N_R$, $\Tor^R_1(N, Q/R)=0$. 

Additionally, \cite[Lemma 6.2]{BH09} establishes that for  a ring $R$ with classical ring of quotients $Q$, a right $Q$-module $V$ is in $\clP_1(Q)$ if and only if there is a right $R$-module $M$ in $\clP_1(R)$ such that $V=M\otimes_RQ$. 

The following lemma is a sort of analogue to \cite[Lemma 6.2]{BH09} for the finitely presented case.
\begin{lem}\label{L:fp-P1-Q}
Let $R$ be a ring with classical ring of quotients $Q$. If $C_Q \in \clPmQ$, then there exist $P_Q \in \clP_0(\modQ)$ and $N_R \in \clPmR$ such that $C_Q \oplus P_Q \cong N \otimes_R Q$.
\end{lem}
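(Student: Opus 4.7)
The plan is to construct $N$ as the cokernel of an $R$-linear map of finitely generated free modules, obtained by clearing denominators in a free presentation of $C \oplus P$ over $Q$, where $P$ is chosen precisely to make both ends of the presentation free over $Q$.

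Since $C$ is finitely presented with $\pdim_Q C \le 1$, there is a short exact sequence $0 \to K \to Q^m \to C \to 0$ in which $K$ is finitely generated projective over $Q$. Pick $K'$ with $K \oplus K' \cong Q^k$ and enlarge the sequence by adding $K$ to both the middle and right-hand terms, and then $K'$ to both the left and middle terms (each via the identity), to produce
\[
0 \to Q^k \xrightarrow{A} Q^{m+k} \to C \oplus K \to 0,
\]
whose outer terms are now free. Set $P := K \in \clP_0(\modQ)$; it then suffices to construct $N \in \clPmR$ with $N \otimes_R Q \cong C \oplus P$.

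The right Ore condition applied to the finitely many entries of the matrix of $A$ yields some $s \in \Sigma$ such that $B := A s$ (i.e., $A$ with every entry multiplied on the right by $s$) has entries in $R$. Let $g \colon R^k \to R^{m+k}$ be the $R$-linear map with matrix $B$ and set $N := \coker(g)$. Since $Q$ is flat over $R$ and right-multiplication by $s$ is an automorphism of $Q^k$, we obtain
\[
N \otimes_R Q \;=\; \coker\bigl(B \colon Q^k \to Q^{m+k}\bigr) \;=\; \coker(A) \;=\; C \oplus P.
\]

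It remains to show that $g$ is injective, which will ensure $N \in \clPmR$. Let $T := \ker(g)$. Flatness of $Q$ gives $T \otimes_R Q = \ker\bigl(B \colon Q^k \to Q^{m+k}\bigr) = 0$, using that $A$ is injective and $s$ is a unit in $Q$. Hence every element of $T$ is right-annihilated by some element of $\Sigma$. On the other hand, $T \subseteq R^k$ is a submodule of a torsion-free $R$-module, since the localisation map $R \hookrightarrow Q$ is injective. Therefore $T = 0$. The only genuinely non-routine step is organising the presentation over $Q$ so that both domain and codomain are free: passing from the $Q$-projective kernel $K$ to a $Q$-free replacement is what forces the direct summand $P$ to appear in the statement, and once this is in place, denominator clearing and the torsion-versus-torsion-free dichotomy complete the proof.
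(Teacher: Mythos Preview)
Your proof is correct and follows essentially the same route as the paper's: obtain a free-by-free presentation of $C\oplus P$ over $Q$ (the paper quotes \cite[Lemma~6.4]{BH09} for this, while you build $P=K$ by hand), then clear denominators in the matrix to descend to an $R$-linear map between finitely generated free $R$-modules whose cokernel is the desired $N$. The only cosmetic differences are that the paper clears denominators column by column via a basis change rather than with a single common $s$, and leaves the injectivity of the resulting $R$-map implicit, whereas you spell it out via the torsion/torsion-free argument.
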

\begin{proof}
 The argument follows from the proofs of ~\cite[Lemma 6.4]{BH09} and \cite[Lemma 6.2]{BH09}, which we reiterate here for completeness.
 
Take $C_Q \in \clPmQ$. Then by \cite[Lemma 6.4]{BH09}, there exists a $P_Q \in \clP_0(\modQ)$ and a short exact sequence 
\[
0 \to Q^m \overset{\mu}\to Q^n \to C_Q \oplus P_Q \to 0
\]

Let $(d_1, \dots, d_m)$  be the canonical basis of the right
$Q$-free module $Q^{m}$. The monomorphism $\mu$ is represented
by a column-finite matrix $A'$ with entries in $Q=R[\Sigma^{-1}]$
acting as left multiplication on the basis elements $d_i$. 

Change the basis $(d_1, \dots, d_m)$ to the basis $(r_id_i\colon 1\leq i\leq m)$ where $r_1\in \Sigma$ is a common denominator of the elements of the $i^{\rm{th}}$ column of $A'$, so that the morphism $\mu$ can be represented
by a column-finite matrix $A$ with entries in $R$. As $R$ is contained in
$Q$, we get the short exact sequence
\[0\to R^m\overset{\nu}\to R^n\to\Coker \nu\to 0,\]
where the map $\nu$ is represented by the matrix $A$. Therefore $\nu \otimes_R \id_Q = \mu$ so $\Coker \nu \in \clPmR$ is the desired $N$.
\end{proof}
%
Recall that $\C(R)=\clPmR^\intercal=\clP_1(R)^\intercal=\big(\varinjlim \clP_1(R)\big)^\intercal$.
\begin{lem}\label{L:min-cotilt-Q}
Let $R$ be a ring with classical ring of quotients $Q$. Then the following hold. 
\begin{enumerate}
\item[(i)] $\C(R) \cap \QMod = \C(Q)$. 
\item[(ii)] If $Z \in \C(R)$, then $Q \otimes_R Z \in \C(Q)$. 
\end{enumerate}
\end{lem}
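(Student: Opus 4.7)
The plan is to leverage a single change-of-rings isomorphism to handle both parts. Because $Q$ is flat as a left $R$-module, tensoring a projective resolution $P_\bullet \to C$ over $R$ with $Q$ produces a projective resolution of $C \otimes_R Q$ over $Q$; combining this with $W = Q \otimes_Q W$ for any left $Q$-module $W$ yields the natural isomorphism
\[
\Tor_n^R(C, W) \;\cong\; \Tor_n^Q(C \otimes_R Q, W).
\]
I would record this at the outset, together with the observation that $C \in \clPmR$ forces $C \otimes_R Q \in \clPmQ$ (tensor a length-one fg projective resolution of $C$ with $Q$).

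For part (i), the inclusion $\C(Q) \subseteq \C(R) \cap \QMod$ drops out of the display above: taking $W = Z \in \C(Q)$ and letting $C$ range over $\clPmR$ shows $\Tor_1^R(C, Z) \cong \Tor_1^Q(C \otimes_R Q, Z) = 0$. For the reverse inclusion, I take $Z \in \C(R) \cap \QMod$ and an arbitrary $C' \in \clPmQ$, then invoke Lemma~\ref{L:fp-P1-Q} to find $N \in \clPmR$ and $P' \in \clP_0(\modQ)$ with $C' \oplus P' \cong N \otimes_R Q$. The change-of-rings identity gives
\[
\Tor_1^Q(N \otimes_R Q, Z) \;\cong\; \Tor_1^R(N, Z) \;=\; 0,
\]
and since $\Tor_1^Q(P', Z) = 0$ by projectivity, splitting the direct sum yields $\Tor_1^Q(C', Z) = 0$.

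For part (ii), my plan is to reduce to (i) by checking that $Q \otimes_R Z$ already lies in $\C(R)$; since it is manifestly a left $Q$-module, (i) will then place it in $\C(Q)$. To put $Q \otimes_R Z$ in $\C(R)$, I would apply Lazard's theorem (available because $Q$ is flat as a right $R$-module) to write $Q = \varinjlim F_\lambda$ with the $F_\lambda$ finitely generated free right $R$-modules. Then $Q \otimes_R Z = \varinjlim (F_\lambda \otimes_R Z)$ is a direct limit of finite powers of $Z$, each of which sits in $\C(R)$. Since $\C(R) = \clPmR^\intercal$ is a $\Tor$-orthogonal class, it is closed under direct sums and direct limits ($\Tor$ commutes with both), so $Q \otimes_R Z \in \C(R)$, as required.

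The only substantive input is the change-of-rings isomorphism; once that and Lemma~\ref{L:fp-P1-Q} are in hand, the argument is essentially bookkeeping, and I do not anticipate a genuine obstacle.
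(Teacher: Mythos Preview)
Your proof is correct and follows the same overall strategy as the paper: the change-of-rings isomorphism for $\Tor$ drives part (i), and part (ii) is handled exactly as you do, by writing $Q$ as a direct limit of finite free right $R$-modules and using closure of $\C(R)$ under direct limits to reduce to (i).

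The one point of divergence is in the reverse inclusion of (i). You work at the finitely presented level, invoking Lemma~\ref{L:fp-P1-Q} to realise an arbitrary $C'\in\clPmQ$ (up to a projective summand) as $N\otimes_R Q$ with $N\in\clPmR$. The paper instead passes to the full class $\clP_1$: it uses $\C(R)=\clP_1(R)^\intercal$ (Theorem~\ref{T:lim-P1-P1modR}) together with \cite[Lemma~6.2]{BH09}, which says every module in $\clP_1(Q)$ is of the form $M\otimes_R Q$ for some $M\in\clP_1(R)$, with no projective correction needed. Your route has the mild advantage of staying with the defining class $\clPmR$ and using only lemmas already proved in the paper; the paper's route avoids the direct-summand bookkeeping. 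Both are equally short.
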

\begin{proof}
(i) By well-known homological formulas, the flatness of $Q$ implies that for each $M \in \ModR$ and $N \in \QMod$, there is the following isomorphism.
\[\Tor^R_1(M, N) \cong \Tor^Q_1(M \otimes_R Q, N)\]
Suppose $M \in \clP_1(R)$. Then $N \in \C(R) \cap \QMod$ if and only if the left-hand side in the above isomorphism vanishes. On the other hand, $N{}\in{} \C(Q)$ \iff $N \in \clP_1(Q)^\intercal$ which in view of \cite[Lemma 6.2]{BH09} amounts to the right-hand side in the above isomorphism vanishing. Therefore $N \in \C(R) \cap \QMod$ \iff $N \in \C(Q)$, which proves $\C(R) \cap \QMod = \C(Q)$.

For (ii), we first note that $\C(R)$ is closed under direct limits as $\Tor$ commutes with direct limits. 
%
As $Q$ is both left and right flat, one can write $Q$ as a direct limit of finitely generated free right $R$-modules $\varinjlim_\alpha R^{n_\alpha}$. Fix a $Z \in \C(R)$. Then $Q \otimes_R Z \cong \varinjlim_\alpha Z^{n_\alpha}$ which must be in $\C(R)$ as $\C(R)$ is closed under direct limits. Moreover, $Q \otimes_R Z \in \QMod$, thus since $\C(R) \cap \QMod = \C(Q)$ from (i), $Q \otimes_R Z \in \C(Q)$.
\end{proof}
\begin{prop}\label{P:P1-F1}
Let $R$ be a ring with classical ring of quotients $Q$. Then  
\[ \varinjlim \clP_1(R) = \F_1(R) \cap {}^{\intercal} \C (Q),\]
where ${}^{\intercal} \C (Q)$ represents the left $\Tor^R_1$-orthogonal of $\C(Q)$ in $\ModR$. 

In particular, if $\fdim Q =0$, $\varinjlim \clP_1(R) = \F_1(R) \cap {}^{\intercal} \QMod$.
\end{prop}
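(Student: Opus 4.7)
The plan is to reduce both sides to statements about orthogonality with respect to $\C(R)$ and $\C(Q)$, using Theorem~\ref{T:lim-P1-P1modR} and Lemma~\ref{L:min-cotilt-Q}. By Theorem~\ref{T:lim-P1-P1modR} we have $\varinjlim \clP_1(R) = {}^{\intercal}\C(R)$, and by Lemma~\ref{L:min-cotilt-Q}(i) we have $\C(Q) = \C(R) \cap \QMod \subseteq \C(R)$. The easy inclusion $\varinjlim \clP_1(R) \subseteq \F_1(R) \cap {}^{\intercal}\C(Q)$ then follows: a direct limit of modules of projective dimension $\leq 1$ has flat dimension $\leq 1$, and the containment ${}^{\intercal}\C(R) \subseteq {}^{\intercal}\C(Q)$ is immediate from $\C(Q) \subseteq \C(R)$.

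For the reverse inclusion, take $M \in \F_1(R) \cap {}^{\intercal}\C(Q)$ and an arbitrary $Z \in \C(R)$; the goal is to show $\Tor^R_1(M,Z) = 0$. First I would observe that every $Z \in \C(R)$ is torsion-free as a left $R$-module, since $R/rR \in \clPmR$ for each $r \in \Sigma$, so $\Tor^R_1(R/rR, Z) = 0$. Next, applying $- \otimes_R Z$ to the short exact sequence $0 \to R \to Q \to Q/R \to 0$ and using $\Tor^R_1(Q/R, Z) = 0$ (which holds for torsion-free $Z$ by \cite[Lemma~5.3]{BH09}) produces the short exact sequence
\[
0 \to Z \to Q \otimes_R Z \to (Q/R) \otimes_R Z \to 0.
\]
Applying $\Tor^R_*(M, -)$ yields the long exact piece
\[
\Tor^R_2(M, (Q/R) \otimes_R Z) \to \Tor^R_1(M, Z) \to \Tor^R_1(M, Q \otimes_R Z).
\]
The left-hand term vanishes because $M \in \F_1(R)$, and the right-hand term vanishes because $Q \otimes_R Z \in \C(Q)$ by Lemma~\ref{L:min-cotilt-Q}(ii) and $M \in {}^{\intercal}\C(Q)$. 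Thus $\Tor^R_1(M, Z) = 0$, so $M \in {}^{\intercal}\C(R) = \varinjlim \clP_1(R)$.

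For the final assertion, if $\fdim Q = 0$ then every module in $\clPmQ$ has projective dimension zero, so $\clPmQ \subseteq \Proj(Q)$ and consequently $\C(Q) = \clPmQ^{\intercal} = \QMod$. Substituting this into the formula already established yields $\varinjlim \clP_1(R) = \F_1(R) \cap {}^{\intercal}\QMod$. There is no serious obstacle; the only subtlety is keeping track of the ring over which each Tor is computed and making sure the short exact sequence $0 \to Z \to Q \otimes_R Z \to (Q/R) \otimes_R Z \to 0$ is genuinely exact (which is precisely where torsion-freeness of $Z$ is needed).
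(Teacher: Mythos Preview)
Your proof is correct and follows essentially the same route as the paper's: both directions rely on Theorem~\ref{T:lim-P1-P1modR} to identify $\varinjlim \clP_1(R)$ with ${}^{\intercal}\C(R)$, and for the nontrivial inclusion you use torsion-freeness of $Z\in\C(R)$ together with \cite[Lemma~5.3]{BH09} to obtain the short exact sequence $0\to Z\to Q\otimes_R Z\to (Q/R)\otimes_R Z\to 0$, then kill the outer terms of the resulting $\Tor$-sequence via $M\in\F_1(R)$ and Lemma~\ref{L:min-cotilt-Q}(ii). The final clause is handled identically.
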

\begin{proof}
First we suppose that $M \in \varinjlim \clP_1(R)$ and show that $M \in \F_1(R) \cap {}^{\intercal} \C (Q)$. The inclusion $\varinjlim \clP_1(R) \subseteq \F_1(R)$ always holds so it remains to show that $M \in {}^{\intercal} \C (Q)$. By Theorem~\ref{T:lim-P1-P1modR}, if $M \in \varinjlim \clP_1(R)$, then $M \in {}^{\intercal} \C (R)$. As $\C(Q) \subseteq \C(R)$ by Lemma~\ref{L:min-cotilt-Q}(i), it follows that ${}^{\intercal} \C (R) \subseteq {}^{\intercal} \C (Q)$, so $M \in {}^{\intercal} \C (Q)$ as required. 

For the converse, fix $M \in \F_1(R) \cap {}^{\intercal} \C (Q)$. We will show $M \in {}^{\intercal} \C (R)$, thus the conclusion follows as $\varinjlim \clP_1(R) = {}^{\intercal} \C (R)$ by Theorem~\ref{T:lim-P1-P1modR}. 

The class $\C(R)$ is contained in $\{ R/sR \mid s \in \Sigma\}^\intercal$, so it consists of torsion-free left $R$-modules, thus by \cite[Lemma 5.3]{BH09}, $\Tor^R_1(Q/R, N)$ for every $N \in \C(R)$.
Therefore for every $N \in \C(R)$ there is a short exact sequence in $\RMod$, 
\[ 0 \to N \to Q \otimes_R N \to Q/R \otimes_R N \to 0.\]
Apply $M \otimes_R -$ to this sequence to get the following exact sequence 
\[\Tor^R_2(M, Q/R \otimes_R N) \to \Tor^R_1(M, N) \to \Tor^R_1(M, Q \otimes_R N).\]
The left-most term vanishes as $M \in \F_1(R)$ and the right-most term vanishes as $M \in {}^{\intercal} \C (Q)$ and $N \in \C(R)$ implies $Q \otimes_R N \in \C(Q)$ by Lemma~\ref{L:min-cotilt-Q}(ii). Therefore the central term vanishes for every $N \in \C(R)$, that is $M \in {}^{\intercal}\C(R)$.

The final statement follows since, if $\fdim Q=0$, then $\clPmQ = \clP_0(\modQ)$. Therefore $\C(Q)= \clP_0(\modQ)^\intercal= \QMod$.
\end{proof}
%


We denote by $\B(R)$ the right $\Ext_R^1$-orthogonal class $\clP_1(R)^\perp$ in $\ModR$. This notation is particularly useful because we often change between the ring $R$ and its localisation $Q$ and their module categories. 

Recall that by ~\cite[Proposition 4.1]{BH09}, the cotorsion pair $(\clP_1(R),\B(R))$ is of finite type if and only if $\B(R)$ is closed under direct sums.
\\

 {\sl From now on all rings will be commutative}.
 
  When $R$ is commutative, its classical ring of quotients is also called the total ring of quotients. 
We begin with a lemma. 
\begin{lem}\label{L:P1-perp}
Let $R$ be a commutative ring. Then $\B(R) \cap \ModQ = \B(Q)$. 
\end{lem}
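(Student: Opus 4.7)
My plan is to reduce both containments to a single change-of-rings identity, namely $\Ext^n_R(N,M) \cong \Ext^n_Q(N \otimes_R Q, M)$ whenever $N \in \ModR$ and $M \in \ModQ$. To set this up I would pick a projective resolution $P_\bullet \to N$ in $\ModR$; because $Q$ is flat over $R$, tensoring with $Q$ produces a projective resolution $P_\bullet \otimes_R Q \to N \otimes_R Q$ in $\ModQ$ (direct summands of free modules remain so after base change, and exactness is preserved by flatness). The standard Hom--tensor adjunction $\Hom_Q(P \otimes_R Q, M) \cong \Hom_R(P, M)$, which is available precisely because $M$ already carries a $Q$-module structure, then identifies the two complexes of Hom's degreewise, and therefore their cohomologies.

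With that isomorphism in hand, both inclusions are essentially immediate. For $\B(R) \cap \ModQ \subseteq \B(Q)$, take $M \in \B(R) \cap \ModQ$ and any $V \in \clP_1(Q)$; by \cite[Lemma 6.2]{BH09} (quoted just before the lemma) one can write $V = N \otimes_R Q$ for some $N \in \clP_1(R)$, and then $\Ext^1_Q(V,M) \cong \Ext^1_R(N,M) = 0$. Conversely, if $M \in \B(Q)$, then $M \in \ModQ$ tautologically, and for any $N \in \clP_1(R)$ the flatness of $Q$ gives $N \otimes_R Q \in \clP_1(Q)$, so $\Ext^1_R(N,M) \cong \Ext^1_Q(N \otimes_R Q, M) = 0$, placing $M$ in $\B(R)$.

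I do not expect any real obstacle. The only step worth writing out is the change-of-rings identity, and that is just flatness of $Q$ together with the adjunction; from there \cite[Lemma 6.2]{BH09} handles one direction and the base-change property of projective dimension at most one handles the other. If one wanted a cleaner packaging, one could equivalently note that the flatness of $Q$ and the Hom--tensor adjunction together imply that $-\otimes_R Q \colon \clP_1(R) \to \clP_1(Q)$ is essentially surjective, and that its right adjoint (restriction of scalars) sends $\B(Q)$ into $\B(R)$ and reflects membership in $\B(Q)$ among $Q$-modules; the calculation above is just the explicit form of this observation.
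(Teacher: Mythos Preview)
Your proposal is correct and follows essentially the same approach as the paper: the paper simply asserts the natural isomorphism $\Ext^1_R(P,B)\cong\Ext^1_Q(P\otimes_RQ,B)$ for $B\in\ModQ$ and then uses \cite[Lemma 6.2]{BH09} exactly as you do. Your only addition is spelling out why that isomorphism holds (flatness of $Q$ plus the Hom--tensor adjunction), which the paper leaves implicit.
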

\begin{proof}
Fix some $P \in \clP_1(R)$ and $B \in \ModQ$ and consider the natural isomorphism
\[
\Ext^1_R(P, B) \cong \Ext^1_Q(P \otimes_R Q, B)
.\]
If $B \in \B(R)$, then the left-hand side in the isomorphism vanishes, thus $B \in \B(Q)$, as every module in $\clP_1(Q)$ is of the form $P \otimes_R Q$ for some $P \in \clP_1(R)$ by \cite[Lemma 6.2]{BH09}. Conversely, if $B \in \B(Q)$, then the right-hand side vanishes, so we conclude that $B \in \B(R)$.
\end{proof}
Recall that a commutative ring $R$ is {\sl perfect} if and only if $\Fdim R=0$. 
In \cite{BSa} and \cite{FS}, a commutative ring $R$ is called  {\sl almost perfect} if its total ring of quotients $Q$ is a perfect ring and for every regular non-unit element $r$ in $R$, $R/rR$ is a perfect ring.
 
If $R$ is a ring and $\{ a_1, a_2, \dots, a_n, \dots\}$ is a sequence of elements of $R$, a {\sl Bass right $R$-module} is a flat module of the form
\[F=\varinjlim(R\overset{a_1}\to R\overset{a_2}\to R\overset{a_3}\to\cdots).\]
All Bass $R$-modules have projective dimension at most one. Thus the class of Bass $R$-modules is contained in $\F_0(R) \cap \clP_1(R)$.
In \cite{Bass}, Bass noticed that a (not-necessarily commutative) ring $R$ is right perfect if and only if every Bass right $R$-module is projective.

  \begin{prop}\label{P:conj-perfect} Let $R$ be a commutative ring with total ring of quotients $Q$. Consider the following conditions:
\begin{enumerate}
\item[(i)] $\clP_1=\F_1$.
\item[(ii)] $\clP_1$ is closed under direct limits.
\item[(iii)] For every regular non-unit element of $R$, $R/rR$ is a perfect ring.
\item[(iv)]  $R$ is an almost perfect ring.
\end{enumerate}
Then (i) $\Rightarrow$ (ii) $\Rightarrow$ (iii) and (iv) $\Rightarrow$ (iii).

If $\Fwdim Q=0$ then (i) and (ii) are equivalent; if moreover $Q$ is a perfect ring, then all the four conditions are equivalent.
\end{prop}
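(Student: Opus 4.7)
My plan is to prove the easy implications first, then the two conditional statements. For (i)$\Rightarrow$(ii), since $\Tor^R_2(-,N)$ commutes with direct limits, the class $\F_1(R)$ is always closed under direct limits, and (i) identifies $\clP_1(R)$ with $\F_1(R)$. The implication (iv)$\Rightarrow$(iii) is immediate from the definition of almost perfect.

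For (ii)$\Rightarrow$(iii), I would fix a regular non-unit $r\in R$ and invoke Bass's criterion: $R/rR$ is perfect if and only if every Bass $R/rR$-module is projective over $R/rR$. Given a Bass module $F=\varinjlim(R/rR\xrightarrow{\bar a_1} R/rR\xrightarrow{\bar a_2}\cdots)$, each copy of $R/rR$ lies in $\clP_1(R)$ since $r$ is regular, so $F\in\varinjlim\clP_1(R)=\clP_1(R)$ by (ii). Lifting the $\bar a_i$ to elements $a_i\in R$ yields the Bass $R$-module $F'=\varinjlim(R\xrightarrow{a_1} R\xrightarrow{a_2}\cdots)$ with $F=F'/rF'$, and since $F'$ is flat and $r$ is regular, one obtains the short exact sequence $0\to F'\xrightarrow{r} F'\to F\to 0$ in $\ModR$. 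Tensoring a length-one projective $R$-resolution of $F'$ with $R/rR$ produces a length-one projective $R/rR$-resolution of $F$, and the hypothesis that $F$ has projective dimension at most one over $R$ together with the Bass structure will force this resolution to split, so $F$ is projective over $R/rR$. Extracting the splitting from the homological data is the key subtlety.

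Next, assuming $\Fwdim Q=0$, I would prove (ii)$\Rightarrow$(i). The hypothesis forces $\clPmQ=\clP_0(\modQ)$, because a finitely presented $Q$-module of flat dimension at most one is projective; hence $\C(Q)=\QMod$, and Proposition~\ref{P:P1-F1} gives $\varinjlim\clP_1(R)=\F_1(R)\cap{}^{\intercal}\QMod$. For $M\in\F_1(R)$ and $N\in\QMod$, the flatness of $Q$ over $R$ yields $\Tor^R_1(M,N)\cong\Tor^Q_1(M\otimes_R Q,N)$; the module $M\otimes_R Q$ has flat dimension at most one over $Q$, hence is flat by $\Fwdim Q=0$, so this Tor vanishes. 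Thus $\F_1(R)\subseteq{}^{\intercal}\QMod$, so $\varinjlim\clP_1(R)=\F_1(R)$, and (ii) yields (i).

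Finally, if $Q$ is also perfect, then $\Fwdim Q=0$, so (i)$\Leftrightarrow$(ii) by the previous step, and (iii)$\Leftrightarrow$(iv) by the definition of almost perfect. The missing link is (iv)$\Rightarrow$(i), which will be the hardest step: for $M\in\F_1(R)$, take a free presentation $0\to K\to F\to M\to 0$, so $K$ is flat and therefore torsion-free. One then has the short exact sequence $0\to K\to K\otimes_R Q\to K\otimes_R(Q/R)\to 0$, where the middle term is projective over $Q$ (flat plus $Q$ perfect) and the right-hand term decomposes as $\varinjlim_{r\in\Sigma} K/rK$, with each $K/rK$ flat over the perfect ring $R/rR$, hence projective over $R/rR$, and so of $R$-projective dimension at most one. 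Assembling these pieces to conclude that $K$ is itself projective over $R$ will be the main obstacle, and relies on structural results for flat modules over almost perfect rings of the type developed by Bazzoni--Salce.
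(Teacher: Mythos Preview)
Your argument for (ii)$\Rightarrow$(iii) has a genuine gap at precisely the point you label ``the key subtlety''. You correctly arrive at $F\in\clP_1(R)$, but the detour through the lifted Bass $R$-module $F'$ and the sequence $0\to F'\xrightarrow{r}F'\to F\to 0$ is unnecessary, and the claim that the resulting length-one $R/rR$-resolution of $F$ must split is never justified. The paper's proof closes this in one line using the Change of Rings Theorem: for a regular element $r$ and a nonzero $R/rR$-module $N$ with $\pdim_{R/rR}N<\infty$, one has $\pdim_R N=\pdim_{R/rR}N+1$. Since $F$ is a Bass $R/rR$-module, $\pdim_{R/rR}F\le 1$; combined with $\pdim_R F\le 1$ this forces $\pdim_{R/rR}F=0$. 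No splitting argument, no lifting to $F'$, is needed.

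Your treatment of the conditional statements differs from the paper's but is sound. For (ii)$\Rightarrow$(i) under $\Fwdim Q=0$ the paper simply cites \cite[Corollary 6.8]{BH09}, whereas you give a self-contained argument via Proposition~\ref{P:P1-F1}: $\Fwdim Q=0$ forces $\clPmQ=\clP_0(\modQ)$, hence $\C(Q)=\QMod$, and then $\F_1(R)\subseteq{}^{\intercal}\QMod$ follows from the $\Tor$-isomorphism and flatness of $M\otimes_RQ$. This is a legitimate alternative and arguably more informative. For the final equivalence when $Q$ is perfect, the paper likewise defers to \cite[Theorem 6.1]{FS} or \cite[Proposition 8.7]{BP1} rather than proving (iv)$\Rightarrow$(i) directly, so your acknowledgment that this step rests on structural results for flat modules over almost perfect rings is in line with the paper's own treatment.
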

\begin{proof}
The implications (i) $\Rightarrow$ (ii) and (iv) $\Rightarrow$ (iii) are straightforward.

The implication (ii) $\Rightarrow$ (iii) is a slight generalisation of \cite[Proposition 8.5]{BP1}, where (i) $\Rightarrow$ (iii) is proved. Suppose $\clP_1(R)$ is closed under direct limits, fix an element $r \in \Sigma$ and a Bass $R/rR$-module $N$. We will show that $N$ is projective as an $R/rR$-module, so that we conclude that $R/rR$ is a perfect ring. As $r$ is regular, $R/rR \in \clP_1(R)$ so all projective $R/rR$-modules are in $\clP_1(R)$. $N$ is a flat  $R/rR$-module, so $N \in \varinjlim \clP_0(R/rR) \subseteq \varinjlim \clP_1(R) = \clP_1(R)$, where the last equality holds by assumption. Moreover, $N \in \clP_1(R/rR)$ so we can apply the Change of Rings Theorem obtaining
\[
\pdim_R N = \pdim_{R/rR} N +1
.\] It follows that $\pdim_{R/rR} N=0$, as required. 

 If $\Fwdim Q=0$, then (i) and (ii) are equivalent by \cite[Corollary 6.8]{BH09}.
 If $Q$ is perfect ring, the equivalence of the four conditions is proved in \cite[Theorem 6.1]{FS} or \cite[Proposition 8.7]{BP1}.
\end{proof}
The example below shows that the condition  $\clP_1(R)=\F_1(R)$ doesn't imply that the total ring of quotients $Q$ is a perfect ring.
\begin{expl}\label{Ex:berg}
In \cite[5.1]{Ber} it is shown that there is a totally disconnected topological space $X$ whose ring $R$ of continuous functions is Von Neumann regular and hereditary. Moreover, every regular element of $R$ is invertible. Hence $R$ coincides with its own total ring of quotients and $\clP_1(R)=\F_0(R)=\Modr R$, but $R$ is not perfect, since it is not semisimple.
Moreover, since $R$ is hereditary, the class $\clP_1(R)^\perp$ coincides with the class of injective $R$-modules which is not closed under direct sums, since $R$ is not noetherian.\end{expl}

\section{Properties of some classes of commutative rings}\label{S:properties}
We recall now  the characterisations of some classes of commutative rings. 

Recall that a commutative ring $R$ is {\sl semihereditary} if every finitely generated ideal is projective.

By \cite[Corollary 4.2.19]{Glaz89} $R$ is semihereditary if and only if  $Q(R)$ is Von Neumann regular and for every prime ideal $\p$, $R_\p$ is a valuation domain. In particular, by \cite[Theorem 4.2.2]{Glaz89}, $R$ is reduced, that is $R$ contains no nilpotent elements.

The following proposition is modelled on Cohen's Theorem, which states that if all prime ideals are finitely generated, then all ideals are finitely generated, see for example \cite[Theorem 8]{Kap} or \cite[Theorem 3.4]{Na}. In the following we consider only the regular ideals.
\begin{prop}\label{P:prime-reg-fg}
Let $R$ be a commutative ring. If every regular prime ideal is finitely generated, then every regular ideal is finitely generated. 
\end{prop}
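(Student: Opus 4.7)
The plan is to adapt the classical Cohen theorem argument, carefully tracking the regularity condition throughout. Let $\mathcal{F}$ be the family of regular ideals of $R$ which are not finitely generated, ordered by inclusion; assume for contradiction that $\mathcal{F} \neq \emptyset$. The goal is to apply Zorn's lemma to produce a maximal element $I \in \mathcal{F}$ and then show that $I$ must be prime, whereupon the hypothesis on regular prime ideals yields the desired contradiction.

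First I would verify the Zorn's lemma hypothesis: given a chain $\{I_\alpha\}$ in $\mathcal{F}$, the union $I = \bigcup_\alpha I_\alpha$ is regular (any $I_\alpha$ already contains a regular element) and is not finitely generated (a finite generating set would lie in some single $I_\alpha$ by cofinality, forcing $I = I_\alpha$ to be finitely generated). Hence $\mathcal{F}$ admits a maximal element $I$.

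The main step, and the technical heart of the argument, is the Cohen-style verification that $I$ is prime. Suppose $a, b \in R \setminus I$ with $ab \in I$. The ideal $I + aR$ strictly contains $I$, and it is still regular, so by maximality of $I$ it is finitely generated; write a finite generating set as $i_1 + r_1 a, \dots, i_n + r_n a$ with $i_k \in I$ and $r_k \in R$. Next consider the transporter $J = (I : a) = \{ r \in R \mid ra \in I\}$. Then $I \subseteq J$ (since $aI \subseteq I$), the element $b$ lies in $J \setminus I$, and $J$ is regular because it contains the regular $I$; again maximality forces $J$ to be finitely generated. A routine calculation then gives the decomposition
\[
I = (i_1, \dots, i_n)R + aJ,
\]
by writing any $x \in I \subseteq I + aR$ as an $R$-linear combination of the $i_k + r_k a$ and observing that the $a$-coefficient must lie in $J$. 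Since $J$ is finitely generated, so is $aJ$, hence so is $I$ — contradicting $I \in \mathcal{F}$.

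Therefore $I$ is prime. Being a regular prime, $I$ is finitely generated by hypothesis, contradicting $I \in \mathcal{F}$. Hence $\mathcal{F} = \emptyset$, which is the desired conclusion. The only subtle point, compared to the classical statement, is making sure the auxiliary ideals $I + aR$ and $J$ constructed in the Cohen trick are still regular so that the maximality of $I$ within $\mathcal{F}$ applies; both contain $I$, so both are automatically regular, and the argument proceeds unchanged.
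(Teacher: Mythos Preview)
Your proposal is correct and follows essentially the same Cohen-style argument as the paper: set up the family of non-finitely-generated regular ideals, use Zorn's lemma to extract a maximal element, and show it must be prime via the transporter decomposition $I = (i_1,\dots,i_n)R + aJ$. The only cosmetic difference is that the paper observes $J \supseteq I + bR$ is finitely generated by first noting $I+bR$ is, whereas you argue directly that $J$ is regular and strictly larger than $I$; both routes work and the key observation you flag---that the auxiliary ideals remain regular because they contain $I$---is exactly the point the paper highlights as the place where regularity matters.
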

\begin{proof} The arguments we use are exactly as in  Cohen's Theorem, but we repeat the proof to outline the steps in which regularity is used.

Let $\Theta$ be the collection of regular ideals which are not finitely generated with a partial order by inclusion. Assume, by way of contradiction  that $\Theta$ is not empty. 
Let $\Phi$ be a totally ordered subset of $\Theta$ and let $I:= \bigcup_{J \in \Phi}J$. We  claim that $I$ is in $\Theta$ so that it is an upper bound of $\Phi$ in $\Theta$. Clearly $I$ contains a regular element, so it remains to show that $I$ is not finitely generated. Suppose for contradiction that $I$ has a finite set of generators $\{ a_1, \dots, a_n \}$. Then there exists a $J_0 \in \Phi$ such that $I=< a_1, \dots, a_n > \subseteq J_0 \subseteq I $, therefore $J_0$ is finitely generated which is a contradiction. 

Thus by Zorn's Lemma, $\Theta$ has a maximal element. We will show that such a maximal element is prime, obtaining a contradiction. Fix a maximal element $L$ of $\Theta$, and suppose it is not prime, that is there exist two elements $a, b \in R \setminus L$ such that $ab \in L$. Then both $L + aR$ and $L + bR$ strictly contain $L$, so they are both finitely generated. Therefore, there exist $x_1, \dots, x_n \in L$ and $y_1, \dots, y_n \in R$ so that $
\{ x_1 +ay_1, \dots, x_n +ay_n\}
$ is a generating set of $L+aR$. 

Consider the ideal $H := (L: a)= \{ r \mid ra \in L\}$. Then $L \subsetneq L+bR \subseteq H$, therefore also $H$ is finitely generated, and so also $aH$ is finitely generated. We now will show that $L = <x_1, \dots, x_n>+aH$ so $L$ is finitely generated. An element $r \in L \subsetneq L+aR$ can be written as follows.
\[
r = s_1(x_1 + a y_1) + \cdots + s_n(x_n + a y_n) = \Sigma_i s_i x_i + a \big( \Sigma_i s_i y_i \big)
\]

$ \Sigma_i s_i y_i \in H$ as $a \big( \Sigma_i s_i y_i \big) = r -  \Sigma_i s_i x_i \in L$.\\ Therefore $L \subseteq <x_1, \dots, x_n>+aH$. The converse inclusion is clear, so $L = <x_1, \dots, x_n>+aH$ which implies that $L$ is finitely generated as $H$ is, a contradiction. Therefore $L$ is prime, and so by the assumption that every prime ideal is finitely generated, $\Theta$ must be empty.  
\end{proof}
%
\begin{lem}\label{L:sh-reg-ideals}
Let $R$ be a reduced commutative ring with total ring of quotients $Q$ of Krull dimension $0$ (for example when $R$ is semihereditary). An ideal $I$ of $R$ is contained in a minimal prime ideal of $R$ if and only if $I$ is not regular.
\end{lem}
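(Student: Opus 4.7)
The plan is to exploit the observation that $Q$ is von Neumann regular: as a localisation of the reduced ring $R$ it is reduced, and a reduced commutative ring of Krull dimension zero is necessarily von Neumann regular. Combined with the standard bijection between $\Spec Q$ and the primes of $R$ disjoint from $\Sigma$, this forces the contractions from $\Spec Q$ to be exactly the minimal primes of $R$: any strict chain $\p' \subsetneq \p$ of primes of $R$ both disjoint from $\Sigma$ would correspond to a strict chain in $\Spec Q$, contradicting $\dim Q = 0$.

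For the implication from ``contained in a minimal prime'' to ``not regular'', I would argue that every element of any minimal prime $\p$ of $R$ is a zero-divisor. Indeed, $R_\p$ is reduced and has a unique prime $\p R_\p$, so $\p R_\p$ must coincide with the nilradical of $R_\p$ and therefore vanish, making $R_\p$ a field. For each $x \in \p$ this produces some $s \in R \setminus \p$ with $sx = 0$, showing $x \notin \Sigma$; hence no ideal $I \subseteq \p$ can contain a regular element.

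For the converse, assume every element of $I$ is a zero-divisor. The key step is to check that $IQ$ is a proper ideal of $Q$: if one had $1 \in IQ$, a common-denominator argument would produce an element of $I$ lying in $\Sigma$, contradicting the hypothesis. Hence $IQ \subseteq \m$ for some $\m \in \Max Q$, and setting $\p = \m \cap R$ the localisation correspondence gives $\m = \p Q$ and $I \subseteq \p$; the opening paragraph then ensures that $\p$ is a minimal prime of $R$.

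I do not foresee a genuine obstacle here. The only point requiring care is that the bijection between $\Spec Q$ and the primes of $R$ disjoint from $\Sigma$ be invoked cleanly, so that the chain of implications ``$I$ non-regular $\Rightarrow IQ \subsetneq Q \Rightarrow IQ \subseteq \m \in \Max Q \Rightarrow I \subseteq \m \cap R$, minimal in $R$'' rests only on the assumptions that $R$ is reduced and that $Q$ has Krull dimension zero.
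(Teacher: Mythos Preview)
Your argument is correct and follows essentially the same strategy as the paper's proof: both directions rest on the two facts that in a reduced ring every minimal prime consists of zero-divisors, and that $\dim Q=0$ forces the primes of $R$ disjoint from $\Sigma$ to be exactly the minimal ones. The only differences are cosmetic: for the forward direction you supply the localisation argument directly where the paper cites a Kaplansky exercise, and for the converse you extend $I$ to $Q$ and contract a maximal ideal of $Q$, whereas the paper first produces a prime $L\supseteq I$ with $L\cap\Sigma=\varnothing$ via multiplicative avoidance and then uses $\dim Q=0$ to see that $L$ is minimal; one small point to tighten is that your opening claim ``contractions from $\Spec Q$ are exactly the minimal primes'' needs not only the antichain observation but also the forward implication (minimal primes miss $\Sigma$), which you do prove in your second paragraph.
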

\begin{proof}
First suppose that $I \subseteq \p$ where $\p$ is a minimal prime ideal of $R$.  Since $R$ is reduced it is an easy exercise to show that the set of zero divisors of $R$ coincides with the union of the minimal prime ideals (see \cite[Exercise 2.2.13, page 63]{Kap}).

For the converse, suppose that $I$ is not regular. Let $L$ be an ideal maximal with respect to the properties $I \subseteq L$, $L \cap \Sigma = \emptyset$. Then as in \cite[Theorem 1.1]{Kap}, $L$ is a prime ideal. 
Assume there is a prime ideal $\p \leq L$. Since $\p$ and $L$ are not regular, $\p Q\leq LQ$.
 By assumption $Q$ has Krull dimension $0$, hence $\p Q=LQ$, which implies $\p =L$, that is $L$ is a minimal prime.
\end{proof}
We also have the following proposition from a paper of Vasconcelos.
\begin{prop}\cite[Proposition 1.1]{Vas}\label{P:proj-ideals}
Let $R$ be a commutative ring with a projective ideal $I$. If $I$ is not contained in any minimal prime ideal it is finitely generated.
\end{prop}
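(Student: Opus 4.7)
The plan is to show that the trace ideal $T = \sum_{\alpha} f_\alpha(I)$ associated to any dual basis $\{x_\alpha, f_\alpha\}_{\alpha}$ of the projective ideal $I$ equals all of $R$; from this, finite generation follows directly. The standard dual-basis identity $x = \sum_\alpha f_\alpha(x)\,x_\alpha$ gives $T \cdot I = I$ at once, and since $T$ is an ideal of $R$ and $I \subseteq R$ one also gets $I = T \cdot I \subseteq T$. Once $T = R$ is known, one can write $1 = \sum_{i=1}^n f_i(y_i)$ as a finite sum with $y_i \in I$ and $f_i \in \Hom_R(I,R)$, and then for every $x \in I$ the $R$-linearity of the $f_i$ gives
\[
x \;=\; \sum_{i=1}^n f_i(y_i)\,x \;=\; \sum_{i=1}^n y_i\, f_i(x) \;\in\; \sum_{i=1}^n R\, y_i,
\]
so $I = \sum_{i=1}^n R\,y_i$ is finitely generated by $y_1,\dots,y_n$.

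The core of the argument, and the main obstacle, is to prove $T = R$ with no a priori finiteness on $I$. I would argue by contradiction: suppose $T \subseteq \m$ for some maximal ideal $\m$ of $R$, and fix a minimal prime $\p \subseteq \m$. Since $I \subseteq T$ while $I \not\subseteq \p$ by hypothesis, there exists $y \in I \setminus \p$, and in particular $\Ann(y) \subseteq \p \subseteq \m$ because $\Ann(y)\cdot y = 0 \subseteq \p$ with $\p$ prime. Now localise at $\m$: the $R_\m$-module $I_\m$ is projective over the commutative local ring $R_\m$, hence free by Kaplansky's theorem. Because $I_\m$ sits inside $R_\m$ as an ideal, its free rank must be at most one: given any two distinct basis elements $e_1, e_2 \in I_\m \subseteq R_\m$, the product $e_1 e_2 = e_2 e_1$ again lies in $I_\m$, and applying the coordinate projection $\pi_1 \colon I_\m \to R_\m$ in the two $R_\m$-linear ways gives $\pi_1(e_1 \cdot e_2) = e_1 \pi_1(e_2) = 0$ and $\pi_1(e_2 \cdot e_1) = e_2 \pi_1(e_1) = e_2$, forcing $e_2 = 0$, a contradiction. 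Hence $I_\m$ is either $0$ or of the form $R_\m e$ with $\Ann_{R_\m}(e) = 0$.

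If $I_\m \cong R_\m$, localising $T \cdot I = I$ yields $TR_\m \cdot e = R_\m e$; combined with $\Ann_{R_\m}(e) = 0$ this forces $TR_\m = R_\m$, contradicting $T \subseteq \m$. So $I_\m = 0$, which means there is $s \in R \setminus \m$ with $sy = 0$. But then $s \in \Ann(y) \subseteq \m$, contradicting $s \notin \m$. This rules out $T \neq R$ and completes the proof. The subtlest step I expect to write down carefully is the reduction to rank at most one for $I_\m$: the projection argument crucially uses both the commutativity of $R_\m$ and the fact that $I_\m$ sits inside the ring, not merely that it is a free module, and one must first invoke Kaplansky's theorem to reduce to the free case since $I_\m$ is not known to be finitely generated.
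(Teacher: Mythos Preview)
The paper does not supply its own proof of this proposition; it is quoted verbatim from \cite[Proposition~1.1]{Vas} and used as a black box. So there is no ``paper's proof'' to compare against beyond Vasconcelos's original.

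Your argument is correct and is essentially the classical trace-ideal proof. A few remarks for when you write it up in full:
\begin{itemize}
\item The identity $f_i(y_i)\,x = y_i\,f_i(x)$ deserves one line of justification: since $x,y_i\in I\subseteq R$ and $f_i$ is $R$-linear, $f_i(xy_i)=x\,f_i(y_i)$ and $f_i(y_ix)=y_i\,f_i(x)$, and commutativity gives $xy_i=y_ix$.
\item The existence of a minimal prime $\p\subseteq\m$ is standard (Zorn downward on the set of primes contained in $\m$), but worth a word since the ring is arbitrary.
\item Your rank-at-most-one argument for the free ideal $I_\m$ is fine; an alternative one-liner is that two $R_\m$-linearly independent elements $e_1,e_2$ of $R_\m$ would satisfy $e_2\cdot e_1 - e_1\cdot e_2 = 0$, a nontrivial relation.
\end{itemize}
With these points made explicit the proof is complete and matches the approach in \cite{Vas}.
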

%
\section{When $\clP_1(R)$ is covering for commutative rings}\label{S:sh-P1-cov}
In this section we collect some facts about when $\clP_1(R)$ is a covering class, and in particular we state some consequences for the total ring of quotients $Q$ of $R$ or for localisations of $R$.

\begin{lem}\label{L:local-P1-cov}
Let $R$ be a commutative ring and suppose $\clP_1(R)$ is covering in $\ModR$. Then the following hold.
\begin{enumerate}
\item[(i)] $\clP_1(R) \cap \ModQ = \clP_1(Q)$
\item[(ii)] $\clP_1(Q)$ is covering in $\ModQ$.
\end{enumerate}
\end{lem}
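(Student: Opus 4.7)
My strategy is to prove (i) as two opposing inclusions, using the covering hypothesis only for one of them, and then to deduce (ii) directly from the existing $\clP_1(R)$-cover of an arbitrary $Q$-module.

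For the inclusion $\clP_1(R)\cap\ModQ\subseteq\clP_1(Q)$ no covering assumption is needed: take $M\in\clP_1(R)\cap\ModQ$ and a length-one projective $R$-resolution $0\to P_1\to P_0\to M\to 0$; tensoring with the flat $R$-algebra $Q$ preserves exactness, and since $R\to Q$ is a ring epimorphism one has $M\otimes_R Q\cong M$, while each $P_i\otimes_R Q$ is $Q$-projective, giving $\pdim_Q M\le 1$. For the reverse inclusion $\clP_1(Q)\subseteq\clP_1(R)\cap\ModQ$ the covering hypothesis is essential (in general $\clP_1(Q)\not\subseteq\clP_1(R)$: already $Q$ itself can have $R$-projective dimension exceeding $1$). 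Fix $M\in\clP_1(Q)$ and consider its special $\clP_1(R)$-cover $0\to B\to A\to M\to 0$. By Corollary~\ref{C:cov-of-localisation} the whole sequence lies in $\ModQ$, and since $B\in\B(R)$ is a $Q$-module, Lemma~\ref{L:P1-perp} yields $B\in\B(Q)$. Then $\Ext^1_Q(M,B)=0$ forces the sequence to split over $Q$, hence over $R$, so $M$ is an $R$-direct summand of $A\in\clP_1(R)$ and therefore $M\in\clP_1(R)$.

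For (ii), given an arbitrary $M\in\ModQ$, its $\clP_1(R)$-cover $\phi\colon A\to M$ lies in $\ModQ$ by Corollary~\ref{C:cov-of-localisation}, and by (i) we have $A\in\clP_1(R)\cap\ModQ=\clP_1(Q)$. The precover property over $Q$ is inherited from that over $R$: any morphism $A'\to M$ with $A'\in\clP_1(Q)\subseteq\clP_1(R)$ factors through $\phi$ in $\ModR$, and this factorization is automatically $Q$-linear because $R\to Q$ is a ring epimorphism. For minimality, any $Q$-linear endomorphism $f$ of $A$ with $\phi f=\phi$ is a fortiori $R$-linear, hence an automorphism by the $\clP_1(R)$-cover property of $\phi$.

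The main obstacle is not technical: all the passage between $R$ and $Q$ is absorbed by the fact that $R\to Q$ is a flat ring epimorphism, so that $\ModQ\hookrightarrow\ModR$ is fully faithful, together with Corollary~\ref{C:cov-of-localisation} and Lemma~\ref{L:P1-perp}. The conceptually delicate point is the reverse inclusion in (i), where one has to exhibit an $R$-projective resolution of length one of a module $M\in\clP_1(Q)$; this is done only indirectly by extracting $M$ as a summand of the $\clP_1(R)$-cover, which is precisely the step that cannot be performed without the covering hypothesis.
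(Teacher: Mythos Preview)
Your proof is correct and follows essentially the same route as the paper: both use Corollary~\ref{C:cov-of-localisation} and Lemma~\ref{L:P1-perp} to show that the $\clP_1(R)$-cover of a $Q$-module splits when $M\in\clP_1(Q)$ (giving the nontrivial inclusion in (i)), and that this same cover serves as a $\clP_1(Q)$-cover for (ii). The only cosmetic difference is that the paper verifies the precover property in (ii) by observing that the kernel lies in $\B(Q)$ (so the cover is special over $Q$), whereas you use the inclusion $\clP_1(Q)\subseteq\clP_1(R)$ from (i) to factor directly through the $\clP_1(R)$-precover property; both arguments are equally valid.
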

\begin{proof}
(i) The inclusion $\clP_1(R) \cap \ModQ \subseteq \clP_1(Q)$ is clear. For the converse, take $M \in \clP_1(Q)$ and consider a $\clP_1(R)$-cover of $M$, 
$0 \to B \to A \overset{\phi}\to M \to 0.$
Then $B\in \B(R)$ and by Corollary~\ref{C:cov-of-localisation}, $A, B \in \ModQ$. From Lemma~\ref{L:P1-perp}, $B \in \B(Q)$, so $\phi$ splits and $M$ must be in $\clP_1(R)$.

(ii) Fix a $Q$-module $M$ and let $0 \to B \to A \overset{\phi}\to M \to 0 $ be a $\clP_1(R)$-cover of $M$. Then again by Corollary~\ref{C:cov-of-localisation}, by (i) and by Lemma~\ref{L:P1-perp}, $A \in \clP_1(Q)$ and $B \in \B(Q)$ so $\phi$ must be a $\clP_1(Q)$-precover of $M$. To see that is a cover, any endomorphism $f$ of $A$ in $\ModQ$ is also a homomorphism in $\ModR$, therefore by the minimality property of $\phi$ as a $\clP_1(R)$-cover, $\phi$ is also a $\clP_1(Q)$-cover.
\end{proof}
We now find some consequences of when $\clP_1(R)$ is covering for localisations of $R$.
\begin{lem}\label{L:Matlis} Let $R$ be a commutative ring and let $S$ be a multiplicative subset of $R$. Then the following hold.
\begin{enumerate}
\item[(i)] If $R[S^{-1}]$ has a $\clP_1(R)$-cover, then $\pdim_RR[S^{-1}] \leq 1$. 
\item[(ii)] If $M\in \clP_1(R)$ and $M\otimes_RR[S^{-1}]$ admits a $\clP_1(R)$-cover, then $\pdim_R(M\otimes_RR[S^{-1}])\leq 1$.
\item[(iii)] Suppose $\clP_1(R)$ is covering. Let $S$,  $T$ be  multiplicative systems of $R$ with $S\subseteq \Sigma$.
 Then $\pdim_R\dfrac{R[S^{-1}]\otimes_R R[T^{-1}]}{R[T^{-1}]}\leq 1$.
\end{enumerate}
\end{lem}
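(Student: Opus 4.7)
The plan is to prove the three parts in order, with (i) a direct splitting argument, (ii) the main technical step via an $\Ext$ computation, and (iii) an application of both to a specific module. In every case, the starting point is a $\clP_1(R)$-cover and the Wakamatsu-type short exact sequence $0 \to B \to A \to C \to 0$ with $A \in \clP_1(R)$ and $B \in \B(R)$, which exists because the cotorsion pair $(\clP_1(R), \B(R))$ is complete. For (i), I would take $C = R[S^{-1}]$; by Corollary~\ref{C:cov-of-localisation} the cover sequence lies in $\Modr R[S^{-1}]$, and since $R[S^{-1}]$ is projective as a right module over itself the sequence splits there, hence in $\Modr R$. Thus $R[S^{-1}]$ is a direct summand of $A$, and so $\pdim_R R[S^{-1}] \leq 1$.

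For (ii), the cover sequence reads $0 \to B \to A \to M \otimes_R R[S^{-1}] \to 0$ with $A, B \in \Modr R[S^{-1}]$ (by Corollary~\ref{C:cov-of-localisation}). The goal is to split this sequence in $\Modr R[S^{-1}]$, which amounts to showing $\Ext^1_{R[S^{-1}]}(M \otimes_R R[S^{-1}], B) = 0$. Since $M \in \clP_1(R)$, choose a projective resolution $0 \to P_1 \to P_0 \to M \to 0$ with $P_0, P_1$ projective $R$-modules; tensoring with the flat $R[S^{-1}]$ yields a projective $R[S^{-1}]$-resolution of $M \otimes_R R[S^{-1}]$. The key observation is that, because $B$ is an $R[S^{-1}]$-module, extension-of-scalars adjunction gives a natural isomorphism $\Hom_{R[S^{-1}]}(P_i \otimes_R R[S^{-1}], B) \cong \Hom_R(P_i, B)$ compatible with the differentials, so $\Ext^1_{R[S^{-1}]}(M \otimes_R R[S^{-1}], B) \cong \Ext^1_R(M, B)$. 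This vanishes because $M \in \clP_1(R)$ and $B \in \B(R) = \clP_1(R)^{\perp_1}$. Hence the cover sequence splits, $M \otimes_R R[S^{-1}]$ is a direct summand of $A \in \clP_1(R)$, and therefore $\pdim_R(M \otimes_R R[S^{-1}]) \leq 1$.

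For (iii), the covering hypothesis together with (i) yields $R[S^{-1}] \in \clP_1(R)$. The assumption $S \subseteq \Sigma$ ensures that $R \hookrightarrow R[S^{-1}]$ is injective, so from the short exact sequence $0 \to R \to R[S^{-1}] \to R[S^{-1}]/R \to 0$ we conclude $R[S^{-1}]/R \in \clP_1(R)$. Applying (ii) to $M := R[S^{-1}]/R$ and the multiplicative system $T$ (the module $M \otimes_R R[T^{-1}]$ has a cover by the covering hypothesis) gives $M \otimes_R R[T^{-1}] \in \clP_1(R)$, and flatness of $R[T^{-1}]$ identifies this tensor product with $\frac{R[S^{-1}] \otimes_R R[T^{-1}]}{R[T^{-1}]}$, as required. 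The principal obstacle lies in the $\Ext$ identification of (ii); once the adjunction argument is set up so that the $R[S^{-1}]$-Ext on the tensored resolution is compared directly with the $R$-Ext on the original resolution, parts (i) and (iii) follow by essentially formal manipulations.
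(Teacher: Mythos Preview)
Your proposal is correct and follows essentially the same argument as the paper: in each part you take the $\clP_1(R)$-cover, use Corollary~\ref{C:cov-of-localisation} to pass to $\Modr{R[S^{-1}]}$, split the sequence (trivially in (i), via the adjunction isomorphism $\Ext^1_{R[S^{-1}]}(M\otimes_R R[S^{-1}],B)\cong\Ext^1_R(M,B)$ in (ii)), and then combine these for (iii) with $M=R[S^{-1}]/R$. The only difference is that you spell out the $\Ext$ identification in (ii) via a projective resolution and adjunction, whereas the paper states it in one line.
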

\begin{proof}
(i) Let the following be a $\clP_1(R)$-cover of $R[S^{-1}]$.
\begin{equation}\label{eq:book}
 0\to Y\to A\to R[S^{-1}]\to 0
 \end{equation} 
By Corollary~\ref{C:cov-of-localisation}, both $A$ and $Y$ are $R[S^{-1}]$-modules as well. Thus (\ref{eq:book}) is an exact sequence of $R[S^{-1}]$-modules, hence it splits. We conclude that $R[S^{-1}]$ is a direct summand of $A$ as an $R$-module, hence $\pdim R[S^{-1}]\leq1$.

(ii)Suppose $M\in \clP_1(R)$ and let the following be a $\clP_1(R)$-cover of $M\otimes_RR[S^{-1}]$.
\begin{equation}\label{eq:book2}
 0\to Y\to A\to M\otimes_RR[S^{-1}]\to 0
 \end{equation} 
As in (i), we conclude by Corollary~\ref{C:cov-of-localisation} that the sequence (\ref{eq:book2}) is in $\ModR[S^{-1}]$. Thus, $\Ext^1_{R[S^{-1}]}(M\otimes_RR[S^{-1}], Y)\cong \Ext^1_R(M, Y)=0$ since $Y\in \B(R)$ and $M\in \clP_1(R)$. Therefore $M\otimes_RR[S^{-1}]$ is a summand of $A$, hence it has projective dimension at most one.

(iii)Suppose $\clP_1(R)$ is covering in $\ModR$. By (i) $\pdim_RR[S^{-1}]/R\leq 1$ as $S \subseteq \Sigma$ so $R \to R[S^{-1}]$ is a monomorphism. Hence (iii) follows by (ii).
\end{proof}

\begin{rem}\label{R:ft-comm-dom} If the class $\clP_1(R)^\perp$ is closed under direct sums, then by \cite[Theorem 1.3.6]{AST17}, or by \cite[Remark 7.4]{BPS}, $\clP_1(R)$ is covering if and only if it is closed under direct limits. 

When $R$ is a commutative domain, then by ~\cite[Corollary 8.1]{BH09}, the class $\clP_1(R)^\perp$ is closed under direct sums (and it coincides with the class of divisible modules) and the four conditions in Proposition~\ref{P:conj-perfect} are equivalent.

\end{rem}


\section{When $\clP_1(R)$ is covering for commutative semihereditary rings}\label{S:sh-comm}

In this section we restrict to looking at commutative semihereditary rings. First we characterise the class of semihereditary commutative rings such that the cotorsion pair $(\clP_1(R), \B(R))$ is of finite type, or equivalently such that $\B(R)$ is closed under direct sums or equivalently $\B(R)= \clP_1(\modR)^\perp$ (see \cite[Lemma 4.1]{BH09}).  
\begin{prop}\label{P:not-finite-type} Let $R$ be a commutative semihereditary ring. The following are equivalent.
\begin{enumerate}
\item[(i)] The cotorsion pair $(\clP_1(R), \B(R))$ is of finite type.
\item[(ii)] The total quotient ring $Q$ of $R$ is semisimple.
\item[(iii)] $R$ is a finite direct product of Pr\"ufer domains.
\end{enumerate}
\end{prop}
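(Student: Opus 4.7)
My plan is to prove the cyclic chain of implications (i) $\Rightarrow$ (ii) $\Rightarrow$ (iii) $\Rightarrow$ (i), using freely that finite type is equivalent to $\B(R)$ being closed under direct sums by~\cite[Proposition 4.1]{BH09}. The main difficulty will be (i) $\Rightarrow$ (ii); the other two directions are essentially structural.

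For (iii) $\Rightarrow$ (i), I would observe that if $R = R_1 \times \cdots \times R_n$ with each $R_i$ a Pr\"ufer domain, the categorical decomposition $\ModR \cong \prod \ModR_i$ gives $\B(R) \cong \prod \B(R_i)$; each factor $\B(R_i)$ is closed under direct sums by~\cite[Corollary 8.1]{BH09} (cited in Remark~\ref{R:ft-comm-dom}), and the product class inherits this closure. For (ii) $\Rightarrow$ (iii), write $Q = K_1 \times \cdots \times K_n$ with coordinate idempotents $e_1, \ldots, e_n \in Q$. Since $\dim Q = 0$, the minimal primes of $R$ correspond bijectively to the maximal ideals of $Q$, so $R$ has exactly $n$ minimal primes $\p_i = R \cap \ker(Q \to K_i)$. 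For each $i$, I would pick $r_i \in \bigcap_{j \neq i} \p_j \setminus \p_i$, which exists by incomparability of distinct minimal primes, and verify in $Q$ that $\Ann_R(r_i) = \p_i$. Since a commutative semihereditary ring is a p.p.\ ring, $\Ann_R(r_i) = f_i R$ for some idempotent $f_i \in R$. Inspecting the coordinates of $f_i$ in $Q$: the $i$-th entry is $0$ because $f_i \in \p_i$, and each $j$-th entry ($j \neq i$) must be $1$, for otherwise $\p_i = f_i R$ would be contained in $\p_j$, contradicting incomparability. Hence $f_i = 1 - e_i$ in $Q$, so $e_i \in R$; the $e_i$ form an orthogonal family of idempotents summing to $1$, yielding $R \cong \prod e_i R \cong \prod R/\p_i$, a finite product of semihereditary domains, i.e.\ Pr\"ufer domains.

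The hardest direction (i) $\Rightarrow$ (ii) will proceed by first transferring finite type from $R$ to $Q$: by Lemma~\ref{L:P1-perp}, $\B(Q) = \B(R) \cap \ModQ$, and because the embedding $\ModQ \hookrightarrow \ModR$ preserves coproducts, the closure of $\B(R)$ under direct sums descends to $\B(Q)$ in $\ModQ$. Thus $(\clP_1(Q), \B(Q))$ is also of finite type. Now $Q$ is Von Neumann regular, so every finitely presented $Q$-module is both flat and finitely presented, hence finitely generated projective by Lazard's theorem; in particular $\clPmQ = \clP_0(\modQ)$ and $\clPmQ^\perp = \ModQ$. Finite type therefore collapses to $\B(Q) = \ModQ$, equivalently $\clP_1(Q) = \clP_0(Q)$. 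To conclude that $Q$ is semisimple I will argue by contradiction: if $Q$ is not semisimple then its Boolean algebra of idempotents is infinite, and hence admits an infinite orthogonal family $\{e_n\}_{n \in \bbN}$. Setting $M = \bigoplus_n e_n Q$, $M$ is a projective, non-finitely-generated proper ideal of $Q$; the exact sequence $0 \to M \to Q \to Q/M \to 0$ then exhibits $Q/M \in \clP_1(Q)$. However $Q/M$ cannot be projective: otherwise the sequence would split, $M$ would be a direct summand of $Q$ and hence principal (as every finitely generated ideal of a commutative VNR ring is idempotent-generated), contradicting the non-finite-generation of $M$. This contradicts $\clP_1(Q) = \clP_0(Q)$ and completes the argument.
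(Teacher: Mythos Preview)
Your proof is correct and follows the same cyclic scheme (i) $\Rightarrow$ (ii) $\Rightarrow$ (iii) $\Rightarrow$ (i) as the paper. The implication (iii) $\Rightarrow$ (i) and the first half of (i) $\Rightarrow$ (ii) (transferring closure under direct sums to $\B(Q)$ via Lemma~\ref{L:P1-perp} and deducing $\clP_1(Q)=\clP_0(Q)$ from $\clPmQ=\clP_0(\modQ)$) coincide with the paper's argument.

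The two places where you diverge are both in the direction of more self-contained reasoning. For the final step of (i) $\Rightarrow$ (ii), the paper simply notes that $\clP_1(Q)=\clP_0(Q)$ says $Q$ is perfect, and a Von Neumann regular perfect ring is semisimple (every module is flat, hence projective); your explicit construction of a module in $\clP_1(Q)\setminus\clP_0(Q)$ from an infinite orthogonal family of idempotents is a hands-on reproof of this special case. For (ii) $\Rightarrow$ (iii), the paper just invokes a result of Endo~\cite[Corollary, p.~117]{Endo}, whereas you lift the coordinate idempotents of $Q$ into $R$ directly, using that a semihereditary ring is a p.p.\ ring so each minimal prime $\p_i=\Ann_R(r_i)$ is generated by an idempotent. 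Your route avoids two external citations at the cost of some additional argument; the paper's is shorter but leans on those references.
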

\begin{proof} (i) $\Rightarrow$ (ii). By assumption, $\B(R)$ is closed under direct sums, thus also $\B(Q)$ is closed under direct sums by Lemma~\ref{L:P1-perp}. Thus $\B(Q)= \clP_1(\modQ)^\perp$.
By assumption $Q$ is Von Neumann regular, hence $\clP_1(\modQ)=\clP_0(\modQ)$ as every $Q$-module is flat. Therefore $\B(Q)=\ModQ$ and we conclude that $\clP_1(Q)=\clP_0(Q)$, that is $Q$ is a perfect ring. Thus, $\ModQ=\F_0(Q)=\clP_0(Q)$, which means that $Q$ is semisimple.
 
 (ii) $\Rightarrow$ (iii). Follows by \cite[Corollary, page 117]{Endo} as a semihereditary ring $R$ has weak global dimension at most one.
 
 (iii) $\Rightarrow$ (i). Obvious by Remark~\ref{R:ft-comm-dom}.
 \end{proof}
By the above proposition we conclude that the class of semihereditary commutative rings such that the cotorsion pair $(\clP_1(R), \B(R))$ is of not finite type is rather big.

We now begin our investigation of the cotorsion pair $(\clP_1(R), \B(R))$ for any commutative semihereditary ring $R$.

The following holds also for not-necessarily commutative rings.
\begin{lem}\label{L:sh-limP1}
Suppose $R$ is a semihereditary ring. Then $\clP_1(R)$ is closed under direct limits if and only if $R$ is hereditary.
\end{lem}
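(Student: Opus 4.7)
The easy direction is that if $R$ is hereditary then every module has projective dimension at most one, hence $\clP_1(R)=\ModR$, which is trivially closed under direct limits. So the content is in the forward implication, and my plan is as follows.

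The first step is to observe that over a semihereditary ring every finitely presented module lies in $\clP_1(R)$. Indeed, the standard characterisation of (right) semihereditary rings is that every finitely generated submodule of a free module is projective; for commutative rings this is equivalent to the definition that every finitely generated ideal is projective, and in the non-commutative setting one uses this equivalent form directly. Thus given a finitely presented $M$ with presentation
\[
0\to K\to R^n\to M\to 0,
\]
the module $K$ is finitely generated, hence projective, and therefore $\pdim_R M\leq 1$. In particular $\modR\subseteq \clP_1(R)$, and on a semihereditary ring $\modR$ coincides with the class of finitely presented modules.

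The second step is to invoke the well-known fact that every $R$-module is a direct limit of finitely presented modules. Combining this with the assumption that $\clP_1(R)$ is closed under direct limits, we get
\[
\ModR=\varinjlim(\text{finitely presented modules})\subseteq \varinjlim\clP_1(R)=\clP_1(R),
\]
so $\clP_1(R)=\ModR$. Equivalently $\gldim R\leq 1$, which is exactly the statement that $R$ is hereditary.

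There is essentially no obstacle in this argument: the whole proof is two lines once one recognises that the semihereditary hypothesis puts all finitely presented modules inside $\clP_1(R)$, after which closure under direct limits trivially forces $\clP_1(R)=\ModR$. The only small point requiring attention is to state or reference the equivalence \emph{semihereditary} $\Leftrightarrow$ \emph{every finitely generated submodule of a free module is projective}, so that the argument applies verbatim in the not-necessarily-commutative case mentioned in the lemma.
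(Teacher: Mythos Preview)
Your proof is correct and follows exactly the same approach as the paper: both argue that over a semihereditary ring every finitely presented module lies in $\clP_1(R)$, and then use that every module is a direct limit of finitely presented modules to conclude $\clP_1(R)=\ModR$. Your version merely spells out in more detail why finitely presented modules have projective dimension at most one.
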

\begin{proof}
Sufficiency is clear, since $R$ is hereditary if and only if $\clP_1(R)=\ModR$.
The converse follows immediately since every $R$-module is a direct limit of finitely presented modules and if $R$ is semihereditary, every finitely presented module is in $\clP_1(R)$.
\end{proof}
We first consider the case of a Von Neumann regular commutative ring.
%
\begin{prop}\label{P:vnr-P1} Let $R$ be a Von Neumann regular commutative ring. Then $\clP_1(R)$ is covering if and only if $R$ is a hereditary ring.
\end{prop}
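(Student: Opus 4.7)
The ``if'' direction is immediate: when $R$ is hereditary we have $\clP_1(R)=\ModR$, which is trivially a covering class.

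For the ``only if'' direction, assume $\clP_1(R)$ is covering. I plan to show that every ideal $I$ of $R$ is projective, which is equivalent to $R$ being hereditary. The strategy is to invoke Lemma~\ref{L:Matlis}(i), so I need to realize each quotient $R/I$ as a localization of $R$ at a suitable multiplicative subset.

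Concretely, for an ideal $I$ of $R$ I would set $S:=\{r\in R \mid rR+I=R\}$, the preimage in $R$ of the unit group of $R/I$. I plan to verify that $S$ is multiplicative and that the natural ring homomorphism $R[S^{-1}]\to R/I$ is an isomorphism. Surjectivity is clear since $R\to R/I$ is surjective, so the key step is injectivity: given $r\in I$, I need to produce $t\in S$ annihilating $r$. Here the Von Neumann regular hypothesis is essential: choose a quasi-inverse $b$ with $rbr=r$, so $e:=rb$ is an idempotent lying in $I$ with $er=r$. Then $t:=1-e$ belongs to $S$ because $tR+I \supseteq (1-e)R+eR=R$, and $tr=r-er=0$. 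Hence $R[S^{-1}]\cong R/I$.

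With this identification established, Lemma~\ref{L:Matlis}(i) applies, since $R[S^{-1}]\cong R/I$ admits a $\clP_1(R)$-cover by assumption; it yields $\pdim_R R/I=\pdim_R R[S^{-1}]\leq 1$, which forces $I$ to be projective (from the short exact sequence $0\to I\to R\to R/I\to 0$). As $I$ was arbitrary, every ideal of $R$ is projective, so $R$ is hereditary. The main technical obstacle is precisely the identification of each $R/I$ with a localization of $R$; this rests essentially on the existence of quasi-inverses in a VNR ring. Once it is in hand, the rest is a direct application of Lemma~\ref{L:Matlis}(i).
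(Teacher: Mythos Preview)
Your argument is correct and takes a genuinely different route from the paper's. The paper works directly with a $\clP_1(R)$-cover $0\to B\to A\to R/I\to 0$ and invokes Proposition~\ref{P:Xu}: for each idempotent $e\in I$ the summand $Ae$ of $A$ lies in $B$, hence $Ae=0$; since $I$ is generated by such idempotents this forces $AI=0$, and then a short computation shows the cover sequence splits.

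Your approach instead packages the cover-theoretic input into the already-proved Lemma~\ref{L:Matlis}(i), at the cost of the (nice) observation that in a commutative Von Neumann regular ring every cyclic quotient---and hence every quotient $R/I$---is a localisation of $R$. The verification you sketch is sound: $S$ is multiplicative, its elements become units in $R/I$, and the quasi-inverse trick shows every $r\in I$ is killed in $R[S^{-1}]$, so the induced map $R[S^{-1}]\to R/I$ is bijective. What your route buys is economy: once Lemma~\ref{L:Matlis}(i) is available, there is no further cover analysis to do. What the paper's route buys is directness and independence from Lemma~\ref{L:Matlis}; it also makes transparent exactly which minimality property of the cover is being used (no nonzero summand in the kernel, via Proposition~\ref{P:Xu}), whereas your appeal to Lemma~\ref{L:Matlis}(i) hides the analogous use of Proposition~\ref{P:A-covers} one level down.
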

\begin{proof} $R$ is semi-hereditary, so it remains to show that every infinitely generated ideal $I$ of $R$  is projective. Consider a $\clP_1(R)$-cover of $R/I$ 
 \[(\ast)\quad 0\to B\to A\to R/I\to 0.\]
 The ideal $I$ is the sum of its finitely generated ideals which are all of the form $eR$, for some idempotent element $e\in R$. For every idempotent element $e\in I$, we have $Ae\subseteq B$, hence by Proposition~\ref{P:Xu} $Ae=0$. We conclude that $AI=0$.
 On the other hand, $A=B+xR$ for some element $x\in A$ such that $xR\cap B=xI$. Thus $B\cap xR=0$, since $AI=0$ and we infer that the sequence $(\ast)$
 splits, thus p.dim $R/I\leq 1$ and $I$ is projective.
 \end{proof}

We pass now to the case of semihereditary commutative rings.
%
\begin{lem}\label{L:Rp-P1-covering}
Let $R$ be a commutative semihereditary ring. Then the following statements hold. 
\begin{enumerate}
\item[(i)] For every $M \in \clP_1(R)$, $M \otimes_R R_\p \in \clP_1(R_\p) $
\item[(ii)] For every $N \in \B(R)$, $N \otimes_R R_\p \in \B(R_\p)$.
\item[(iii)] If $\clP_1(R)$ is covering then $\clP_1(R_\p)$ is covering.
\end{enumerate}
\end{lem}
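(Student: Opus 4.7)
Part (i) is routine flat base change: take a projective resolution $0\to P_1\to P_0\to M\to 0$ of $M$ over $R$ and tensor by the flat $R$-algebra $R_\p$; exactness is preserved and each $P_i\otimes_RR_\p$ is projective over $R_\p$, so $M\otimes_RR_\p\in\clP_1(R_\p)$.

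For part (ii), I would first exploit that $R_\p$ is a valuation domain, hence a commutative domain. By Remark~\ref{R:ft-comm-dom}, $\B(R_\p)$ is closed under direct sums, so the cotorsion pair $(\clP_1(R_\p),\B(R_\p))$ is of finite type; combined with the fact that $R_\p$ is semihereditary, this gives $\B(R_\p)=(\modr{R_\p})^{\perp_1}$. It therefore suffices to show $\Ext^1_{R_\p}(C',\,N\otimes_RR_\p)=0$ for every finitely presented $R_\p$-module $C'$. To produce such a test module I would adapt the denominator-clearing step from the proof of Lemma~\ref{L:fp-P1-Q}: given a presentation $R_\p^m\xrightarrow{\alpha}R_\p^n\to C'\to 0$, multiplying each column of $\alpha$ by a suitable element of $R\setminus\p$ produces a matrix with entries in $R$ whose cokernel $C\in\modr{R}$ satisfies $C\otimes_RR_\p\cong C'$ (injectivity of the integral map is unnecessary here; only the cokernel matters). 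Since $R$ is semihereditary, $C\in\clP_1(R)$, and the flat base change isomorphism $\Ext^1_{R_\p}(C\otimes_RR_\p,\,N\otimes_RR_\p)\cong \Ext^1_R(C,N)\otimes_RR_\p$ (valid because $C$ is finitely presented and $R_\p$ is flat over $R$) then vanishes because $N\in\B(R)$.

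For part (iii), view a given $R_\p$-module $X$ as an $R$-module and take a $\clP_1(R)$-cover $0\to B\to A\xrightarrow{\phi}X\to 0$. By Corollary~\ref{C:cov-of-localisation} this is already an exact sequence in $\Modr{R_\p}$, so $A$ and $B$ are $R_\p$-modules and tensoring with $R_\p$ acts as the identity; parts (i) and (ii) then place $A$ in $\clP_1(R_\p)$ and $B$ in $\B(R_\p)$, so $\phi$ is a special $\clP_1(R_\p)$-precover of $X$, and minimality transfers for free since every $R_\p$-linear endomorphism of $A$ is in particular $R$-linear. The main obstacle is part (ii), where one must reduce testing $\B(R_\p)$-membership from all of $\clP_1(R_\p)$ down to finitely presented modules and descend those test modules back to $R$ so that the hypothesis $N\in\B(R)$ can be used; the three ingredients making this go through are the finite-type property of the cotorsion pair over the valuation domain $R_\p$, the denominator-clearing trick, and the flat base change isomorphism for $\Ext^1$ on finitely presented modules.
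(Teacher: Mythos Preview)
Your proposal is correct, and parts (i) and (iii) match the paper's argument essentially verbatim. For part (ii) you take a slightly longer route than the paper. The paper exploits that $R_\p$ is a commutative domain, so by \cite[Theorem~7.2]{BH09} (cf.\ Remark~\ref{R:ft-comm-dom}) the class $\B(R_\p)$ is exactly the class of divisible $R_\p$-modules; hence one only needs to test $\Ext^1_{R_\p}(R_\p/aR_\p,\,N_\p)=0$ for $a\in R_\p$, and after clearing a single denominator one may assume $a\in R$, so the required descended module is simply the cyclic $R/aR\in\clPmR$ and the flat base change isomorphism $\Ext^1_R(R/aR,N)_\p\cong\Ext^1_{R_\p}(R_\p/aR_\p,N_\p)$ finishes. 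Your version instead tests against all finitely presented $R_\p$-modules and descends each one via a full matrix denominator-clearing; this is perfectly valid but does more work than necessary in this setting. What your approach buys is slightly greater generality---it would apply to any flat epimorphism $R\to S$ with $S$ semihereditary and $(\clP_1(S),\B(S))$ of finite type---whereas the paper's shortcut leans on the domain-specific identification $\B(R_\p)=\{\text{divisibles}\}$.
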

\begin{proof}
(i) Clear as $R_\p$ is flat. \\
(ii) As $R_\p$ is a commutative domain, the cotorsion pair $(\clP_1(R_\p), \B(R_\p))$ is of finite type, and $\B(R_\p)$ coincides with the class of divisible modules by ~\cite[Theorem 7.2]{BH09}. Thus it is sufficient to show that for every $N \in \B(R)$, $\Ext^1_{R_\p}(R_\p/aR_\p, N \otimes_R R_\p)=0$ for each $a \in R_\p$. Without loss of generality, we can assume that $a \in R$. As $R$ is commutative and $R/aR \in \clPmR$ since $R$ is semihereditary, there is the following isomorphism.
\[
 \Ext^1_R(R/aR, N)_\p \cong \Ext^1_{R_\p}(R_\p/aR_\p, N_\p) 
 \]
 As $R/aR \in \clP_1(R)$, the left-hand side vanishes as required. \\
(iii) Let  $(\ast\ast)\ 0\to B\to A \overset{\phi}\to M \to 0
$ be a $\clP_1(R)$-cover of $M \in \ModR_\p$.
Then $A, B \in \ModR_\p$ by Corollary~\ref{C:cov-of-localisation}, and by (ii), $B \in \B(R_\p)$. Therefore, $(\ast\ast)$ is also a $\clP_1(R_\p)$-precover of $M$ in $\ModR_\p$. Moreover, since any $R_\p$-module homomorphism is also an $R$-module homomorphism, $(\ast\ast)$ is a $\clP_1(R)$-cover of $M$. 
\end{proof}
\begin{lem}\label{L:locals-dvrs}
Let $R$ be a commutative semihereditary ring such that $\clP_1(R)$ is covering. Then for each prime $\p$, the ring $R_\p$ is a discrete valuation domain.
Moreover, $\ModR_\p \subseteq \clP_1(R)$. 

As a consequence, every maximal ideal $\m$ in $R$ is projective. 
\end{lem}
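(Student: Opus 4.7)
My plan is to derive the lemma in three steps: first show that each $R_\p$ is a DVR, then deduce $\Mod R_\p \subseteq \clP_1(R)$ using the DVR structure, and finally read off the projectivity of maximal ideals as an immediate corollary.

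For the first step, Lemma~\ref{L:Rp-P1-covering}(iii) transfers the covering hypothesis from $R$ to $R_\p$, so $\clP_1(R_\p)$ is covering in $\Mod R_\p$. Since $R$ is semihereditary, $R_\p$ is a valuation domain and in particular a commutative domain, so Remark~\ref{R:ft-comm-dom} and Proposition~\ref{P:conj-perfect} combine to give that $R_\p/rR_\p$ is a perfect ring for every $0\neq r\in R_\p$. Being a local chain ring, $R_\p/rR_\p$ has DCC on all ideals as soon as it has DCC on principal ideals, hence it is Artinian. I then plan to argue that $\m_{R_\p}$ is principal (its image in $R_\p/rR_\p$ is finitely generated, hence principal in the chain ring, and lifts to principality in the valuation ring), and a bookkeeping argument on the value group $\Gamma$ of $R_\p$---using the finiteness of the ideal lattice of $R_\p/rR_\p$ for every $r$---pins $\Gamma$ down to $\mathbb{Z}$ (or makes it trivial), so that $R_\p$ is a DVR.

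For the second step, take any $M\in\Mod R_\p$ and a $\clP_1(R)$-cover
\[
0\to B\to A\overset{\phi}\to M\to 0
\]
of $M$. By Corollary~\ref{C:cov-of-localisation} this sequence lives in $\Mod R_\p$. I plan to show that $B$ is injective as an $R_\p$-module: for each $a\in R$, the cyclic $R$-module $R/aR$ lies in $\clP_1(R)$ (since $aR$ is a principal, hence projective, ideal in the semihereditary ring $R$), so $\Ext^1_R(R/aR,B)=0$; flat base change along $R\to R_\p$ applied to the $R_\p$-module $B$ then yields $\Ext^1_{R_\p}(R_\p/aR_\p,B)=0$. Since every principal ideal of $R_\p$ is of the form $aR_\p$ with $a\in R$ up to a unit, $B$ is divisible over $R_\p$, hence injective since $R_\p$ is a DVR. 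The cover sequence then splits in $\Mod R_\p$, and $M$, being a direct summand of $A\in\clP_1(R)$, lies in $\clP_1(R)$.

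The last assertion is immediate: for a maximal ideal $\m$, the field $R/\m$ is an $R_\m$-module, so $R/\m\in\clP_1(R)$ by the previous paragraph, and the short exact sequence $0\to\m\to R\to R/\m\to 0$ forces $\pdim_R\m=0$. The main obstacle I foresee is the last piece of the first step---showing that an almost perfect valuation domain is precisely a DVR---which requires either invoking the classification of almost perfect valuation domains or carrying out the direct value-group analysis outlined above, rather than being a black-box consequence of an earlier result in the excerpt.
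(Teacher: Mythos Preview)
Your argument is correct, but the first step takes a detour the paper avoids. You (implicitly) use Remark~\ref{R:ft-comm-dom} to pass from ``$\clP_1(R_\p)$ covering'' to ``$\clP_1(R_\p)$ closed under direct limits'', and then push this through Proposition~\ref{P:conj-perfect} to obtain that $R_\p$ is almost perfect, finishing with a value-group analysis (perfect chain ring $\Rightarrow$ Artinian $\Rightarrow$ every interval $[0,\gamma]$ in $\Gamma$ is finite $\Rightarrow$ $\Gamma\cong\bbZ$). The paper instead stops immediately after ``$\clP_1(R_\p)$ closed under direct limits'' and applies Lemma~\ref{L:sh-limP1}: since $R_\p$ is a valuation domain it is semihereditary, so closure under direct limits forces $R_\p$ to be hereditary, and a hereditary valuation domain is a DVR. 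This replaces your entire almost-perfect/value-group discussion with a one-line appeal to a lemma already in the paper. For the second step your approach is close to the paper's but packaged differently: you show $B$ is divisible over $R_\p$ (this is exactly Lemma~\ref{L:Rp-P1-covering}(ii) specialised to $N=B$) and then use that divisible equals injective over a DVR to split the sequence; the paper instead notes that the sequence is a $\clP_1(R_\p)$-cover and, $R_\p$ being hereditary, concludes that $\phi$ is an isomorphism. The third step is identical. In short, both proofs are valid; the paper's is shorter because it recognises that Lemma~\ref{L:sh-limP1} already does the heavy lifting you reconstruct by hand.
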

\begin{proof}
 First note that as $R_\p$ is a valuation domain, it is also semihereditary. By Lemma~\ref{L:Rp-P1-covering}(iii), $\clP_1(R_\p)$ is covering. Therefore by Remark~\ref{R:ft-comm-dom}, $\clP_1(R_\p)$ is closed under direct limits, so 
 by Lemma~\ref{L:sh-limP1} we conclude that $R_\p$ is hereditary, hence a discrete valuation domain.
 
 To see that $\ModR_\p \subseteq \clP_1(R)$, let $ 0\to B\to A \overset{\phi}\to M \to 0$ be a $\clP_1(R)$-cover of an $R_\p$-module $M$. 
As in the proof of Lemma~\ref{L:locals-dvrs}, the sequence is
also a $\clP_1(R_\p)$-cover of $M$ in $\ModR_\p$. We have just shown that $R_\p$ is hereditary, so $M \in \clP_1(R_\p)$, hence $\phi$ is an isomorphism. Since $A \in \clP_1(R)$, also $M \in \clP_1(R)$ for any $R_\p$-module $M$. 
 
 For the second statement, let $\m$ be a maximal ideal of $R$. Once one observes that $R/ \m$ is an $R_\m$-module, it follows that $\m$ is projective as $\ModR_\m \subseteq \clP_1(R)$ by what is proved above. 
 \end{proof}
\begin{lem}\label{L:reg-prime-are-max}
Let $R$ be a commutative semihereditary ring such that $\clP_1(R)$ is covering. Then every regular prime ideal is maximal.
\end{lem}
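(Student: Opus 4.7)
The plan is to argue by contradiction using Lemma~\ref{L:locals-dvrs}, which guarantees that every localization of $R$ at a prime is a discrete valuation domain, and in particular has Krull dimension at most one.

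First, I would suppose for contradiction that $\p$ is a regular prime ideal that is not maximal. Then there exists a maximal ideal $\m$ with $\p \subsetneq \m$. Apply Lemma~\ref{L:locals-dvrs} to $\m$: the localization $R_\m$ is a discrete valuation domain, so its only prime ideals are $0$ and $\m R_\m$. Since $\p R_\m$ is a prime of $R_\m$ properly contained in $\m R_\m$ (because $\p \subsetneq \m$ and the primes of $R_\m$ correspond to the primes of $R$ contained in $\m$), we must have $\p R_\m = 0$.

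Next, I would exploit the regularity of $\p$ to reach a contradiction. Choose a regular element $r \in \p \cap \Sigma$. Then $r/1 = 0$ in $R_\m$, which means there exists $s \in R \setminus \m$ with $sr = 0$. But $r$ is a non-zero-divisor in $R$, so $s = 0$, contradicting the fact that $s \notin \m$ (and hence $s \neq 0$).

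There is no real obstacle here: once Lemma~\ref{L:locals-dvrs} has been established, the contradiction is immediate from the standard behaviour of localization together with the definition of a regular element. The only thing to be slightly careful about is the convention for ``discrete valuation domain''; even if $R_\m$ turns out to be a field (Krull dimension zero), the argument $\p R_\m = 0$ followed by $sr = 0$ for some $s \notin \m$ still goes through unchanged.
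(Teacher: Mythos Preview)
Your proof is correct and follows essentially the same route as the paper: both arguments localize at a maximal ideal $\m \supseteq \p$ and use Lemma~\ref{L:locals-dvrs} to conclude that $R_\m$ is a discrete valuation domain, forcing $\p R_\m \in \{0, \m R_\m\}$. The only cosmetic difference is that the paper invokes Lemma~\ref{L:sh-reg-ideals} to rule out $\p$ being minimal (hence $\p R_\m \neq 0$), whereas you rule out $\p R_\m = 0$ directly by tracking a regular element through the localization; these are two phrasings of the same observation.
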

\begin{proof}
Take $\p$ to be a regular prime ideal of $R$. Then by Lemma~\ref{L:sh-reg-ideals}, $\p$ cannot be minimal. Fix a maximal ideal $\m$ such that $\p \subseteq \m$. Then by Lemma~\ref{L:locals-dvrs} in the localisation $R_\m$, there are exactly two prime ideals, $0$ and $\m_\m$, which are in bijective correspondence with the prime ideals of $R$ contained in $\m$. As $\p$ cannot be minimal, one concludes that $\p = \m$, therefore $\p$ is maximal. 
\end{proof}
The following corollary follows easily.
\begin{cor}\label{C:reg-max-fg}
Let $R$ be a commutative semihereditary ring such that $\clP_1(R)$ is covering. Then every regular prime (hence maximal) ideal is finitely generated.
\end{cor}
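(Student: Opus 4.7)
The plan is to combine three ingredients already established in the excerpt: the identification of regular primes with maximal ideals, the projectivity of maximal ideals, and Vasconcelos' criterion for when a projective ideal is finitely generated.

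First I would fix a regular prime ideal $\p$ of $R$. By Lemma~\ref{L:reg-prime-are-max}, every such $\p$ is in fact maximal, so Lemma~\ref{L:locals-dvrs} applies and yields that $\p$ is a projective ideal of $R$. This disposes of the ``projective'' hypothesis needed for Vasconcelos' result.

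Next I would check the other hypothesis of Proposition~\ref{P:proj-ideals}, namely that $\p$ is not contained in any minimal prime. This is immediate from Lemma~\ref{L:sh-reg-ideals}: since $R$ is semihereditary (in particular reduced with Von Neumann regular total quotient ring $Q$, which has Krull dimension zero), an ideal of $R$ is contained in a minimal prime if and only if it is not regular; and $\p$ is regular by assumption.

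With both hypotheses verified, Proposition~\ref{P:proj-ideals} gives that $\p$ is finitely generated, completing the proof. No step should present any real obstacle, as the work has all been carried out in the preceding lemmas; the corollary is really just the statement that these pieces fit together. (As a byproduct, combining this with Proposition~\ref{P:prime-reg-fg} will give that every regular ideal of $R$ is finitely generated, which will presumably be used in the sequel.)
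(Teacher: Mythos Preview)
Your proposal is correct and follows exactly the same route as the paper's proof: Lemma~\ref{L:reg-prime-are-max} plus Lemma~\ref{L:locals-dvrs} give projectivity of the regular prime $\p$, Lemma~\ref{L:sh-reg-ideals} shows $\p$ avoids all minimal primes, and Proposition~\ref{P:proj-ideals} then forces $\p$ to be finitely generated. Your parenthetical about combining with Proposition~\ref{P:prime-reg-fg} is also precisely how the paper proceeds in the proof of Theorem~\ref{T:sh-P1-cov-lim}.
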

\begin{proof}
Let $\p$ be a regular prime ideal of $R$. By Lemma~\ref{L:reg-prime-are-max} and Lemma~\ref{L:locals-dvrs} $\p$ is a projective ideal. Hence by Lemma~\ref{L:sh-reg-ideals} and Proposition~\ref{P:proj-ideals} $\p$ is finitely generated. 
\end{proof}
%
We will use the following characterisation of hereditary rings.
\begin{thm}\cite[Corollary 4.2.20]{Glaz89},\cite[Theorem 1.2]{Vas}\label{T:hered-rings}
Let $R$ be a commutative ring. Then $R$ is hereditary if and only if $Q(R)$ is hereditary and any ideal of $R$ that is not contained in any minimal prime ideal of $R$ is projective. 
\end{thm}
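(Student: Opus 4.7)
The plan is to handle the two directions separately, with most of the work in the backward implication.

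For the forward direction, assume $R$ is hereditary. Then every ideal of $R$ is projective, in particular every ideal not contained in any minimal prime ideal. To see that $Q = Q(R)$ is hereditary, use that the localization map $R \to Q$ is a flat ring epimorphism. Any $Q$-module $M$, viewed as an $R$-module, admits a projective resolution of length at most one. Since the embedding $\Modr Q \hookrightarrow \Modr R$ is fully faithful and $\Ext^i_R(M,N) \cong \Ext^i_Q(M,N)$ for $Q$-modules $M,N$ (a standard consequence of $R \to Q$ being a ring epimorphism), one concludes $\gldim Q \leq 1$.

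For the backward direction, assume $Q$ is hereditary and every ideal of $R$ not contained in any minimal prime is projective. Let $I$ be an arbitrary ideal of $R$; the goal is to show $I$ is projective. If $I$ is not contained in any minimal prime, apply the hypothesis directly. Otherwise, the strategy is to decompose $I$ into a piece handled by the hypothesis on $R$ and a piece handled by the hereditariness of $Q$. Concretely, consider the extended ideal $IQ$, which is projective over $Q$, together with the natural map $I \to IQ$ whose kernel is the $\Sigma$-torsion subideal $t(I)$ of $I$. The plan is to fit $I$ into a short exact sequence involving $t(I)$ and $I/t(I)$, show that the torsion-free quotient $I/t(I)$ pulls back a projective structure from $Q$, and treat $t(I)$ using the fact that torsion ideals live in the union of minimal primes (as zero-divisors) and can be further analyzed idempotent-by-idempotent.

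The main obstacle is handling the torsion part $t(I)$: ideals entirely contained in minimal primes can have delicate structure, and one cannot simply apply the hypothesis on ``ideals not contained in a minimal prime''. Since $Q$ hereditary forces $\Spec Q$ to be zero-dimensional, minimal primes of $R$ correspond to maximal ideals of $Q$, and one can extract an idempotent decomposition of $Q$ indexed by its maximal ideals; pulling back allows one to study $t(I)$ locally at each minimal prime $\mathfrak{p}$ of $R$. Combined with the trick that any ideal $J$ of $R$ containing a regular element on one component and zero on another splits off a summand generated by the corresponding idempotent from $Q$, this should reduce the projectivity of $t(I)$ to iterated applications of the ``not-contained-in-a-minimal-prime'' hypothesis on suitable subquotients. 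Piecing together $t(I)$ and $I/t(I)$ via the short exact sequence $0 \to t(I) \to I \to I/t(I) \to 0$ then yields $\pdim_R I = 0$, completing the proof.
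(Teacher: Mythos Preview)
The paper does not prove this theorem; it is quoted without proof from \cite[Corollary 4.2.20]{Glaz89} and \cite[Theorem 1.2]{Vas} and then invoked as a black box in the proof of Theorem~\ref{T:sh-P1-cov-lim}. There is therefore no in-paper argument to compare your proposal against.

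On its own merits, your forward direction is correct and standard. Your backward direction, however, is only a plan, and several of its steps are genuine gaps rather than routine details. The most serious is the claim that ``the torsion-free quotient $I/t(I)$ pulls back a projective structure from $Q$'': the module $I/t(I)$ is not an ideal of $R$, so neither hypothesis applies to it directly, and knowing that $IQ \cong (I/t(I))\otimes_R Q$ is $Q$-projective does not by itself yield $R$-projectivity of $I/t(I)$; descending projectivity along the flat epimorphism $R\to Q$ is precisely the hard part of the theorem, not a step one can wave through. Second, ``extract an idempotent decomposition of $Q$ indexed by its maximal ideals'' is not available in general: even granting that $Q$ is zero-dimensional, a commutative hereditary ring equal to its own total quotient ring need not split as a product of local pieces (compare Example~\ref{Ex:berg}), so the torsion ideal $t(I)$ cannot be reduced ``idempotent-by-idempotent'' to the regular-ideal hypothesis in the way you describe. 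As written, the backward implication is an outline whose central mechanism is missing; you should consult Vasconcelos's original argument, which proceeds along quite different lines.
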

We now can state the main result of this paper.
\begin{thm}\label{T:sh-P1-cov-lim}
Let $R$ be a commutative semihereditary ring such that $\clP_1(R)$ is covering. Then $R$ is hereditary. Therefore $\clP_1(R)$ is closed under direct limits.
\end{thm}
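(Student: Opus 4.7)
The plan is to invoke the characterisation of hereditary commutative rings in Theorem~\ref{T:hered-rings}: we must show that $Q = Q(R)$ is hereditary and that every ideal of $R$ not contained in any minimal prime is projective. Once hereditarity is established, $\clP_1(R) = \ModR$ is trivially closed under direct limits (or apply Lemma~\ref{L:sh-limP1} directly).

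First I would handle the total quotient ring. Since $R$ is semihereditary, $Q$ is Von Neumann regular (this is recalled just before Proposition~\ref{P:prime-reg-fg}). By Lemma~\ref{L:local-P1-cov}(ii), the hypothesis that $\clP_1(R)$ is covering in $\ModR$ passes to $\clP_1(Q)$ being covering in $\ModQ$. Now Proposition~\ref{P:vnr-P1} applied to $Q$ gives that $Q$ is hereditary.

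Next I would show that every ideal of $R$ not contained in a minimal prime ideal is projective. By Lemma~\ref{L:sh-reg-ideals}, an ideal of $R$ is contained in some minimal prime exactly when it fails to be regular, so this amounts to showing that every regular ideal of $R$ is projective. By Corollary~\ref{C:reg-max-fg}, every regular prime ideal of $R$ is finitely generated. Proposition~\ref{P:prime-reg-fg} then upgrades this to: every regular ideal of $R$ is finitely generated. Since $R$ is semihereditary, every finitely generated ideal is projective, giving the desired conclusion.

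Applying Theorem~\ref{T:hered-rings}, $R$ is hereditary. The final sentence then follows from Lemma~\ref{L:sh-limP1}, which asserts that a semihereditary ring has $\clP_1(R)$ closed under direct limits iff it is hereditary. I do not anticipate any serious obstacle here, since all the hard work was done in the preceding lemmas (in particular in establishing that regular primes are maximal and finitely generated, and that $\clP_1(Q)$ inherits the covering property); the proof is essentially an assembly of Theorem~\ref{T:hered-rings}, Proposition~\ref{P:vnr-P1}, Proposition~\ref{P:prime-reg-fg}, and Corollary~\ref{C:reg-max-fg}.
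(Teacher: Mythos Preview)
Your proposal is correct and follows essentially the same route as the paper's proof: both invoke Theorem~\ref{T:hered-rings}, handle $Q$ via Lemma~\ref{L:local-P1-cov}(ii) and Proposition~\ref{P:vnr-P1}, and handle regular ideals via the chain Lemma~\ref{L:sh-reg-ideals} $\to$ Corollary~\ref{C:reg-max-fg} $\to$ Proposition~\ref{P:prime-reg-fg} $\to$ semihereditarity. The only cosmetic difference is that you cite Lemma~\ref{L:sh-limP1} for the closing sentence, whereas the paper leaves this implicit.
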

\begin{proof}
We use Theorem~\ref{T:hered-rings} to show that $R$ must be hereditary. First we show that the classical ring of quotients, $Q$, is hereditary. From Lemma~\ref{L:local-P1-cov} and the assumption that $\clP_1(R)$ is covering, we know that $\clP_1(Q)$ is covering. Additionally, by \cite[Corollary 4.2.19]{Glaz89} $Q$ is Von Neumann regular. Therefore, $Q$ must be hereditary by Proposition~\ref{P:vnr-P1}. 

Now we show that any ideal not contained in a minimal prime ideal is projective. By Lemma~\ref{L:sh-reg-ideals}, it is enough to show that any regular ideal is projective, which follows if any regular ideal is finitely generated as $R$ is semihereditary. By Proposition~\ref{P:prime-reg-fg}, it is sufficient to show that the regular prime ideals are finitely generated, which follows from Corollary~\ref{C:reg-max-fg}. We conclude that all ideals not contained in a minimal prime ideal are finitely generated, and hence are projective as $R$ is semihereditary.
\end{proof}

\bibliographystyle{alpha}
\bibliography{references}

\begin{thebibliography}{AH{\v{S}}T18}

\bibitem[AH{\v{S}}T18]{AST17}
Lidia Angeleri~H\"{u}gel, Jan {\v{S}}aroch, and Jan Trlifaj.
\newblock Approximations and {M}ittag-{L}effler conditions the applications.
\newblock {\em Israel J. Math.}, 226(2):757--780, 2018.

\bibitem[Bas60]{Bass}
Hyman Bass.
\newblock Finitistic dimension and a homological generalization of semi-primary
  rings.
\newblock {\em Trans. Amer. Math. Soc.}, 95:466--488, 1960.

\bibitem[Ber71]{Ber}
George~M. Bergman.
\newblock Hereditary commutative rings and centres of hereditary rings.
\newblock {\em Proc. London Math. Soc. (3)}, 23:214--236, 1971.

\bibitem[BH09]{BH09}
Silvana Bazzoni and Dolors Herbera.
\newblock Cotorsion pairs generated by modules of bounded projective dimension.
\newblock {\em Israel J. Math.}, 174:119--160, 2009.

\bibitem[BLG20]{BLG20}
Silvana Bazzoni and Giovanna Le~Gros.
\newblock Covering classes in $1$-tilting cotorsion pairs over commutative
  rings.
\newblock in prepation, 2020.

\bibitem[BP19]{BP1}
Silvana Bazzoni and Leonid Positselski.
\newblock {$S$}-almost perfect commutative rings.
\newblock {\em J. Algebra}, 532:323--356, 2019.

\bibitem[BP{\v{S}}20]{BPS}
Silvana Bazzoni, Leonid Positselski, and Jan {\v{S}}{\v{t}}ov{\'{\i}}{\v{c}}ek.
\newblock Projective covers of flat contramodules.
\newblock Preprint, arxiv.org/abs/1911.11720, 2020.

\bibitem[BS02]{BSa}
S.~Bazzoni and L.~Salce.
\newblock Strongly flat covers.
\newblock {\em J. London Math. Soc. (2)}, 66(2):276--294, 2002.

\bibitem[End61]{Endo}
Shizuo Endo.
\newblock On semi-hereditary rings.
\newblock {\em J. Math. Soc. Japan}, 13:109--119, 1961.

\bibitem[Eno81]{Eno}
Edgar~E. Enochs.
\newblock Injective and flat covers, envelopes and resolvents.
\newblock {\em Israel J. Math.}, 39(3):189--209, 1981.

\bibitem[FS18]{FS}
L\'{a}szl\'{o} Fuchs and Luigi Salce.
\newblock Almost perfect commutative rings.
\newblock {\em J. Pure Appl. Algebra}, 222(12):4223--4238, 2018.

\bibitem[Gla89]{Glaz89}
Sarah Glaz.
\newblock {\em Commutative coherent rings}, volume 1371 of {\em Lecture Notes
  in Mathematics}.
\newblock Springer-Verlag, Berlin, 1989.

\bibitem[GT12]{GT12}
R{\"u}diger G{\"o}bel and Jan Trlifaj.
\newblock {\em Approximations and endomorphism algebras of modules. {V}olume
  1}, volume~41 of {\em de Gruyter Expositions in Mathematics}.
\newblock Walter de Gruyter GmbH \& Co. KG, Berlin, extended edition, 2012.
\newblock Approximations.

\bibitem[Kap74]{Kap}
Irving Kaplansky.
\newblock {\em Commutative rings}.
\newblock The University of Chicago Press, Chicago, Ill.-London, revised
  edition, 1974.

\bibitem[Nag62]{Na}
Masayoshi Nagata.
\newblock {\em Local rings}.
\newblock Interscience Tracts in Pure and Applied Mathematics, No. 13.
  Interscience Publishers a division of John Wiley \& Sons\, New York-London,
  1962.

\bibitem[Sal79]{Sal}
Luigi Salce.
\newblock Cotorsion theories for abelian groups.
\newblock In {\em Symposia {M}athematica, {V}ol. {XXIII} ({C}onf. {A}belian
  {G}roups and their {R}elationship to the {T}heory of {M}odules, {INDAM},
  {R}ome, 1977)}, pages 11--32. Academic Press, London, 1979.

\bibitem[Vas73]{Vas}
W.~Vasconcelos.
\newblock Finiteness in projective ideals.
\newblock {\em J. Algebra}, 25:269--278, 1973.

\bibitem[\v{S}18]{S17}
Jan \v{S}aroch.
\newblock Approximations and {M}ittag-{L}effler conditions the tools.
\newblock {\em Israel J. Math.}, 226(2):737--756, 2018.

\bibitem[Xu96]{Xu}
Jinzhong Xu.
\newblock {\em Flat covers of modules}, volume 1634 of {\em Lecture Notes in
  Mathematics}.
\newblock Springer-Verlag, Berlin, 1996.

\end{thebibliography}
\end{document}